\RequirePackage{fix-cm}
\documentclass[smallextended]{svjour3}       %
\smartqed  %

\usepackage{graphicx}%
\usepackage{multirow}%
\usepackage{amsmath,amssymb,amsfonts}%
\usepackage{mathrsfs}%
\usepackage[title]{appendix}%
\usepackage{xcolor}%
\usepackage{textcomp}%
\usepackage{manyfoot}%
\usepackage{booktabs}%
\usepackage{algorithm}%
\usepackage{algorithmicx}%
\usepackage{algpseudocode}%
\usepackage{listings}%
\usepackage{subcaption}
\usepackage[square,sort,comma,numbers]{natbib}
\usepackage{doi}
\usepackage{svg}
\usepackage{url}
\usepackage{tikz}
\usetikzlibrary{external}
\ifnum\pdfshellescape=1
\fi
\usepackage{lmodern}
\usepackage{siunitx}
\usepackage[margin=1in]{geometry}

\DeclareRobustCommand{\varlambda}{\text{\usefont{OML}{txmi}{m}{it}\symbol{"15}}}

\begin{document}

\title{Wavelet-based grid adaptation with consistent treatment of high-order sharp immersed geometries\thanks{We acknowledge financial support from an Early Career Award from the Department of Energy, Program Manager Dr.~Steven~Lee, award number DE-SC0020998.}}

\author{Changxiao Nigel Shen \and Wim M.\ van Rees}

\institute{C. N. Shen \at
              Department of Mechanical Engineering, Massachusetts Institute of Technology \\
              \email{shen827@mit.edu.com}           %
           \and
           Wim M.\ van Rees \at
              Department of Mechanical Engineering, Massachusetts Institute of Technology \\
              \email{shen827@mit.edu.com}           %
}
\date{Received: date / Accepted: date}

\maketitle

\abstract{Wavelet-based grid adaptation methods use multiresolution analysis for error estimation, offering a mathematically rigorous approach to adaptive grid refinement when solving Partial Differential Equations (PDEs). However, applying these methods to PDE discretizations with immersed geometries is challenging, as standard interpolating wavelet transforms lose consistency near non-grid-aligned boundary intersections. To address this, we propose a high-order interpolating wavelet transform adaptation strategy compatible with sharp immersed boundary and interface discretizations. The approach performs consistent high-order wavelet transforms on narrow intervals using a 1D polynomial extrapolation technique. To maintain high order, the technique incorporates boundary values and derivatives, which are evaluated from multivariate interpolating polynomials similar to those used in high order immersed finite difference discretizations. Consequently, the proposed approach maintains the wavelet order on any arbitrary smooth multidimensional domain, including near concave geometry sections. This approach enables grid adaptation in complex domains while robustly bounding the numerical error via a manually set refinement threshold. The algorithm’s performance is validated on both static and dynamic problems, including the Navier-Stokes equations with moving boundaries and temporally adapting grid resolutions. The results demonstrate that the proposed method enables effective grid adaptation, establishing a robust, predictable relationship between a user-defined refinement threshold and the overall solution error, even for problems with complex, moving boundaries.}

\keywords{wavelet transform, grid adaptation, immersed method, Navier-Stokes}

\maketitle

\section{Introduction}\label{sec1}
Immersed discretization methods such as the immersed boundary method, immersed interface method, or cut-cell finite volume method are prevalent techniques to simulate partial differential equations with complex and/or moving boundaries. Their primary benefit is that they can discretize boundary conditions on arbitrary geometries while using structured background grids (often Cartesian). This avoids the need to create and maintain body-fitted grids that have to adapt to the boundary motion.

When adaptive methods are combined with immersed geometries, the presence of non-grid-aligned boundaries naturally introduces a jump in the field values across the boundary. Smoothness-based indicators on the extended field would thus detect a resolution- and threshold-independent need for refinement. To remedy this, a common practice is instead to fix the spatial resolution of grid cells along the immersed geometry to a preset value, and keep this fixed during the entire simulation. This practice has been adopted independently by many pioneering works on grid adaptation with immersed boundaries/interfaces, such as \cite{popinet2003gerris} using a volume-of-fluid method, \cite{griffith2007adaptive,liska2017fast} using a smooth immersed boundary treatment, and \cite{min2007second} using a level-set based ghost fluid method. This approach may be appropriate for a specific subclass of problems, but general applications are expected to have spatio-temporally varying demands on resolution along the interface. This motivates research on solution-dependent indicators that can account for the discontinuity of the field across the immersed boundary.

Wavelet-based methods offer an alternative approach to grid adaptation \cite{vasilyev2000second, kevlahan2005adaptive, schneider2010wavelet, hejazialhosseini2010high, rossinelli2015mrag}, by relying on a mathematically rigorous multiresolution analysis of a solution field \cite{mallat_multiresolution_1989}. The wavelet collocation method, referred to as the combination of wavelet-based grid adaptation with collocated numerical schemes such as finite difference, offers several advantages over other AMR strategies: it enables high order grid adaptations (i.e., adaptations that have high compression rates and are compatible with high order numerical discretizations), interpretable adaptation thresholds, rigorous mathematical foundations and consistent refinement and coarsening operators. Wavelet collocation methods are frequently combined with  non-body-fitted methods, such as Brinkman penalization \cite{kevlahan2005adaptive,rossinelli2015mrag,engels2019wavelet}, volume of fluid  \cite{fuster2009simulation}, and the immersed interface method \cite{gillis2022murphy,gabbard2024high}. 
To the best of our knowledge, none of these existing wavelet-based adaptive resolution strategies preserves the formal wavelet order in the vicinity of the boundary or interface. This eliminates some of the advantages of the wavelet collocation method, in particular the robust error scaling with user-defined adaptation thresholds.

In this paper, we develop a novel wavelet transform approach for complex domains, which is used within a wavelet-based grid adaptation approach for sharp immersed interface discretizations. The method is based on a consistent interpolating wavelet transform which preserves its formal wavelet order across any smooth irregular-shaped domain and is compatible with high order discrete forcing immersed methods. The contribution of this work is two-fold:
\begin{enumerate}
  \item An interpolating wavelet transform algorithm for any smooth, irregular domains that are discretized by a background Cartesian grid and an immersed interface. This wavelet transform algorithm preserves the wavelet order across the entire domain. Both the forward wavelet transform (FWT) and the inverse wavelet transform (IWT) have computational complexity $\mathcal{O}(N_p)$, where $N_p$ is the total number of grid points inside the irregular domain.
  \item A consistent, temporally adaptive resolution strategy for a high order immersed interface method, based on the proposed wavelet transform algorithm. Numerical evidence is presented that the proposed wavelet collocation scheme achieves a robust bound of numerical error. Indeed, for linear PDEs, we prove that the user-defined refinement threshold $\varepsilon_r$ gives an upper bound to the numerical error of the simulation. Even for nonlinear problems, the proposed strategy is shown empirically to achieve an excellent control of numerical error.
\end{enumerate}

The scope of this paper is restricted to \textit{temporal} grid adaptation, i.e.\ the grids that are spatially uniform but vary their resolution in time. As further discussed at the end of this work, integrating the same algorithm into spatially adaptive resolution schemes requires addressing a non-trivial interdependence in the overlap region, where field points are used for both wavelet transforms and immersed differential operators. 

The remainder of this paper is as follows. In Section \ref{sec:wtai}, we describe our interpolating wavelet transform algorithm in a simple 1D domain. This is followed by an extension to domains of arbitrary shapes in higher dimensions discussed in Section \ref{sec:wtag}. In Section~\ref{sec:coupling} we discuss the integration of the wavelet-based grid adaptation into an IIM solver, and we also perform a detailed mathematical error analysis of the algorithms that we propose. In Section \ref{sec:numresult}, we present our numerical results to support our analysis. Finally, in Section \ref{sec:conclusion}, we draw a conclusion and discuss future directions.

\section{Interpolating Wavelet Transform on Arbitrary Intervals}
\label{sec:wtai}
In this section, we develop a novel framework to extend the interpolating wavelet transform to arbitrary domain geometries by implementing a carefully designed polynomial extrapolation scheme within each wavelet transform step. Since multi-dimensional wavelet transforms are applied as tensorial products of one-dimensional transforms, we first discuss one-dimensional scenarios in this section; the extension to higher dimensions is discussed below in Section~\ref{sec:wtag}.

\subsection{Notation and background}
This work exclusively considers interpolating wavelets, which are based on polynomial interpolation \cite{deslauriers_interpolation_1987}. Suppose we have a grid function $f$ defined on the set of points $x_k^L = k/2^L = x_{2k}^{L+1}$ where $k\in \mathbb{Z}$, and we wish to estimate function values in between these grid points. To obtain an interpolation at the point $x_{k+1/2}^L=(k+1/2)/2^L = x_{2k+1}^{L+1}$, we consider a set of $N=2D$ equispaced points around $k/2^L$, $\chi = \left\{(k-D+1)/2^L,\cdots, (k+D)/2^L\right\}$. The interpolated function value can then be computed as
\begin{equation}
  P_L[f]\left(\frac{k+1/2}{2^L}\right) \equiv p_{k}^L\left(\frac{k+1/2}{2^L}\right), \label{eq:polyinterp}
\end{equation}
where $P_L[f]$ denotes the function interpolant of $f$ based on grid values $x_k^L$, and $p_k^L$ is the unique $(N-1)$-th order polynomial determined by the data tuple $(\chi, f(\chi))$. Applying this recursively, we can obtain the interpolated function value $P_L[f]$ at any binary rational $x$ \cite{donoho1999deslauriers}, and continuously extend the function to any point in $\mathbb{R}$. The above process, known as an inverse wavelet transform, is a linear operator acting on the linear space of the grid values $f(x_k^L)$. In particular, a set of basis functions can be found to represent the interpolant in the interpolation space $V^L$. These basis functions, denoted as $\varphi_k^{L}(x)$, are called the scaling functions, and satisfy
\begin{equation}
  P_L[f](x) \equiv \sum_{k}\lambda_k^L\varphi_k^L(x),
\end{equation} where $\lambda_k^L = f(x_k^L)$ are called the scaling coefficients. The residual of the interpolant at any location $x$ is defined as
\begin{equation}
  R_L[f](x) \equiv P_L[f](x)-f(x).
\end{equation} 
The residuals $R_L[f]$ of all $f$ form another linear space $W^L$, with a set of basis functions $\psi_k^L$ (also known as the wavelet functions), such that
\begin{equation}
  P_{L+1}[f](x) \equiv \sum_{j}\lambda_j^{L+1}\varphi_j^{L+1}(x)= \sum_{k}\lambda_k^{L}\varphi_k^{L}(x) + \sum_{m}\gamma_m^{L} \psi_m^{L}(x).
\end{equation} Here, $\gamma_m^L = R_L[f](x_{m+1/2}^L)$ are the detail coefficients. In summary, the forward wavelet transform decomposes a function $f$ into scaling and detail coefficients $\lambda_k^L, \gamma_m^L$, while the inverse wavelet transform reconstructs the function $f$ from $\lambda_k^L$ and $\gamma_m^L$.

For practical numerical computations, we seldom use the functional analytical framework mentioned above. Instead, we perform the so-called \textit{lifting scheme}, which is mathematically equivalent but much easier to implement numerically \cite{sweldens2005building}. The lifting scheme performs the wavelet transform in three steps. First, the equispaced grid points are split into two sets: the even and the odd points. They are denoted as $\text{even}_{L}=\{x_{2k}^{L+1}\}$ and $\text{odd}_{L}=\{x_{2k+1}^{L+1}\}$, respectively. Next, the function values at even points are used to predict $\tilde{f}$ at the odd points by polynomial interpolation (equation \ref{eq:polyinterp}). The differences between the predictions and the true function values at the odd grid points are then stored as the detail coefficients $\gamma_m^{L}$. The prediction step can be expressed in terms of a filter operation acting on the grid values. This leads to the \textit{dual lifting coefficients}, whose numerical values for wavelet orders $N=2,4,6$ are provided in Table \ref{tab:duallifting}.

\begin{table}[h]
  \centering
  \begin{tabular}{l|llllll}
    N & $S_{-2}$ & $S_{-1}$ & $S_{0}$ & $S_{1}$ & $S_{2}$ & $S_{3}$ \\ \hline
    2 &          &          & 1/2      & 1/2     &         &         \\
    4 &          & -1/16    & 9/16     & 9/16    & -1/16   &         \\
    6 & 3/256    & -25/256  & 75/128   & 75/128  & -25/256 & 3/256
  \end{tabular}
  \vspace{0.4cm}
  \caption{The dual lifting coefficients for $N=2,4,6$, respectively.}
  \label{tab:duallifting}
\end{table}

For a non-lifted interpolating wavelet transform, the scaling coefficients $\lambda^{L}_k$ are equal to the even function values $f(x_{2k}^{L+1})$, and the transform is complete. For an $\tilde{N}$-th order lifted interpolating wavelet transform, the detail coefficients are used to correct the scaling coefficients $\lambda^{L}_k$ so that the $\tilde{N}$-th moment of the function $f$ is preserved. For the interpolating wavelet of order $(N, \tilde{N})$, the $\tilde{N}$-th order dual lifting coefficients are multiplied by $1/2$ for this correction step \cite{sweldens2005building}. The associated \textit{primal lifting coefficients} are provided, e.g., in \cite{gillis2022murphy}. With this notation, the non-lifted interpolating wavelet transform is denoted as $\tilde{N}=0$. For any $\tilde{N}$, the detail coefficients of an $N$th order interpolating wavelet transform on level $L$ satisfy $|\gamma^L| \propto \mathcal{O}(2^{-LN})$ \cite{sweldens1994quadrature}, or, since the grid spacing $h \propto 2^{-L}$, $|\gamma^L| \propto \mathcal{O}(h^N)$.

The inverse wavelet transform is applied through undoing each lifting scheme step of the forward wavelet transform. That is, the scaling coefficients are first "un-corrected" using the detail coefficients; next, function values on the odd points are recovered from the detail coefficients; finally, the even and odd grid points are merged properly.

\subsection{One-dimensional wavelet transform on sufficiently large intervals}
\label{sec:wt1d}
We discuss here applying a wavelet transform on 1D intervals containing a sufficient number of grid points (specified more precisely below). Consider a computational domain $C=[0, 1)$ containing a domain of interest $\Omega=[a, b]$ where $0\leq a<b < 1$ is contained in, but generally does not conform to, the computational domain $C$. The computational domain is initialized at the resolution level $L+1$ by discretizing $C$ into a set of $2^{L+1}$ equally-spaced collocated grid points so that $x_j^{L+1}=j 2^{-(L+1)}$ where $j=0, \cdots , 2^{L+1}-1$. These grid points are further divided into two sets, namely the ones that lie inside our domain of interest: $\mathcal{X}_\Omega^{L+1}=\left\{x_j^{L+1}: a\leq x_j^{L+1} \leq b\right\}$; and those that lie outside: $\mathcal{X}_E^{L+1}=\left\{x_j^{L+1}: 0\leq x_j^{L+1}<a \text{ or } b<x_j^{L+1}< 1\right\}$. In this section, we assume that there are at least $2N$ grid points inside $\Omega$, \textit{i.e.,} $|\mathcal{X}_\Omega^{L+1}|\geq 2N$, where $N$ is the order of the wavelet. The case where $|\mathcal{X}_\Omega^{L+1}|< 2N$ will be discussed in Section \ref{sec:wt1d_narrow}.

We would like to perform forward and inverse interpolating wavelet transforms on the grid points in $\mathcal{X}_\Omega^{L+1}$. We focus on wavelet transforms across one resolution level,
\begin{equation}
  P_{L+1}[f](x) = \sum_{j}\lambda_j^{L+1}\varphi_j^{L+1}(x)\Leftrightarrow \sum_{k}\lambda_k^L\varphi_k^L(x) + \sum_{m}\gamma_m^L \psi_m^L(x)=P_{L}[f](x)+\sum_{m}\gamma_m^L \psi_m^L(x),
\end{equation} 
since wavelet transforms across multiple levels of resolutions are composed of performing the one-level wavelet transform recursively. Before proceeding within this context, we note that interpolating wavelets on intervals with grid-aligned boundaries have been long developed through one-sided interpolation schemes \cite{donoho1992interpolating}. The approach presented below for sufficiently wide intervals is in some sense an extension of this technique to non-grid-aligned intervals, though presented from an extrapolation-interpolation perspective.

\subsubsection{Forward wavelet transform}\label{sec:FWT}
Recall that $L+1$ is the level of resolution associated to the grid $\mathcal{X}_\Omega^{L+1}$. Consider an arbitrary function $f_j=f(x_j^{L+1})$ defined on the set of grid points $x_j^{L+1} \in \mathcal{X}_\Omega^{L+1}$ inside our domain of interest $\Omega$. On this finest level $L+1$, the function values can be reinterpreted as the scaling coefficients (for lifted wavelets, this only holds at the finest level):
\begin{equation}
  \lambda_j^{L+1} = f_j
\end{equation} 
or
\begin{equation}
  P_{L+1}[f](x) = \sum_{j}\lambda_j^{L+1}\varphi_j^{L+1}(x) = \sum_j f_j \varphi_j^{L+1}(x).
\end{equation} 
From here, a one-level forward wavelet transform is performed by explicitly constructing scaling coefficients $\lambda_k^{L}$ and detail coefficients $\gamma_m^L$ at resolution level $L$. For grid points that are well inside $\Omega$, this is done simply by applying the usual wavelet filters (see Table \ref{tab:duallifting}); for a grid point near the boundary, however, the wavelet filters cannot be directly applied. A simple zero-padding across the domain boundary will cause the detail coefficients $\gamma_m^L$ to be of order $\mathcal{O}(1)$, because the extended function is discontinuous across the boundary. This eliminates the advantage of using wavelet detail coefficients as numerical error indicators.
Instead, we follow a strategy based on polynomial extrapolation using the following steps:
\begin{enumerate}
  \item First, split the signal into temporary scaling and detail coefficients on a coarser level $L$. Specifically, define
    \begin{equation}
      \lambda_k^L = \lambda_{2k}^{L+1}, \quad \gamma_m^L = \lambda_{2m+1}^{L+1}.
    \end{equation}
  \item We denote the indices of leftmost and rightmost grid points on level $L$ inside $\Omega$ as $\alpha^L$ and $\beta^L$, respectively. At the left boundary, a polynomial of degree $N-1$ using the scaling coefficients $\lambda_{\alpha^L}^L, \lambda_{\alpha^L+1}^L, \cdots, \lambda_{\alpha^L+N-1}^L$ is constructed. The polynomial is then evaluated at the set of ghost points $x^L_{\alpha^L-1}, \cdots, x^L_{\alpha^L-N/2}$ that are outside but near the left boundary of $\Omega$. A similar extrapolation treatment is done for the right boundary, from which a set of ghost values to the right of the boundary are obtained. This extrapolation strategy is illustrated in Fig. \ref{fig:extrapolation} (top), and is referred to as the Type I extrapolation. An alternative, Type~II extrapolation is applied instead when the nearest-boundary grid point is odd and the boundary function value is known (Fig. \ref{fig:extrapolation}, middle). These two strategies are further discussed in section~\ref{sec:usebc}. \label{sec:extra-without-bc}
  \item Using the scaling coefficients $\lambda_k^L$ together with the extrapolated values $\varlambda_k^L$, a \textit{dual lifting} step is performed. Define the extended field of scaling coefficients as
    \begin{equation}
      \tilde{\lambda}_k^L=
      \begin{cases}
        \lambda_k^L    & \text{if } x^L_k \in \mathcal{X}^L_\Omega \\
        \varlambda_k^L & \text{in ghost regions}                   \\
        0              & \text{otherwise.}
      \end{cases}\label{eq:lambda}
    \end{equation} The \textit{dual lifting} process updates the detail coefficients as follows:
    \begin{equation}
      \gamma_m^L \leftarrow \gamma_m^L - \sum_{k} \tilde{S}_{m, k}\tilde{\lambda}_k^L.
    \end{equation}
    This fully determines the detail coefficients inside the domain $\Omega$. When using a non-lifted interpolating wavelet, the algorithm terminates here.
    \label{sec:duallifting}\\
  \item For lifted interpolating wavelets, an additional \textit{lifting} procedure is needed, where detail coefficients are used to correct the values of scaling coefficients. This amounts to determining the detail coefficients not only inside the domain, but also in a region outside but near the domain boundary, or, the ghost region. Since a $(N-1)$-th order polynomial extrapolation is used to obtain ghost values $\varlambda_k^L$, the detail coefficients in the ghost region are uniformly zero. This is because an $N$-th order interpolating wavelet reconstructs any polynomial of order less than $N$. The extended field of detail coefficients is then defined as
    \begin{equation}
      \tilde{\gamma}_j^L=
      \begin{cases}
        \gamma_m^L & \text{if } x^L_m \in \mathcal{X}^L_\Omega \\
        0          & \text{otherwise}.
      \end{cases} \label{eq:gamma}
    \end{equation}
    The \textit{lifting} procedure then updates the scaling coefficients as follows:
    \begin{equation}
      \lambda_k^L \leftarrow \lambda_k^L + \sum_{m} S_{k, m}\tilde{\gamma}_m^L.
    \end{equation}
    \label{sec:lifting}
\end{enumerate}
After these calculations, all the ghost point values are discarded.

\subsubsection{Inverse wavelet transform}
For the inverse wavelet transform a similar polynomial extrapolation-based approach is adopted. The outline of the algorithm is as follows:
\begin{enumerate}
  \item The first step is to undo the \textit{lifting} procedure. Recall that the detail coefficients in the ghost region are uniformly zero. Therefore, the detail coefficients are extended by zero padding, as described in equation \ref{eq:gamma}. Once this is done, the scaling coefficients update is reverted:
    \begin{equation}
      \lambda_k^L \leftarrow \lambda_k^L - \sum_m S_{k,m} \tilde{\gamma}_m^L.
    \end{equation}
  \item The second step is polynomial extrapolation, using the same Type~I or Type~II strategy as selected in the Forward wavelet transform.
  \item The extended scaling coefficients $\tilde{\lambda}_k^L$ can now be defined using Equation \ref{eq:lambda}. After that, the detail coefficients of the lazy wavelet transform are recovered by performing the \textit{inverse dual lifting}:
    \begin{equation}
      \gamma_m^L \leftarrow \gamma_m^L + \sum_k \tilde{S}_{m,k} \tilde{\lambda}_k^L.
    \end{equation}
  \item The last step is to merge the scaling and detail coefficients $\lambda_k^L$ and $\gamma_m^L$ to obtain $\lambda_j^{L+1}$:
    \begin{equation}
      \lambda_j^{L+1} =
      \begin{cases}
        \lambda_{j/2}^L \quad \text{if } j \text{ is even} \\
        \gamma_{(j-1)/2}^L \quad \text{if }j \text{ is odd}.
      \end{cases}
    \end{equation}
\end{enumerate}
The forward and inverse wavelet transforms preserve the information of the grid values exactly, because each operation that the forward wavelet transform performs on $\lambda_j^{L+1}$ is reverted exactly by an operation in the inverse wavelet transform. 
Further, apart from the cost of constructing polynomial interpolants, both the forward and the inverse wavelet transform algorithms have a computational complexity of order $\mathcal{O}(N_p)$ where $N_p$ is the number of grid points. The total cost of constructing all the polynomial interpolants and obtain ghost point values is $\mathcal{O}(N_c)$ where $N_c$ is the total number of control points.

\subsubsection{The use of boundary conditions} \label{sec:usebc}
The wavelet transform described above, using a Type~I extrapolation strategy, leads to a set of scaling coefficients representing a coarse version of the original field, and a set of detail coefficients that decay at the order $\mathcal{O}(h^N)$ and encode the local smoothness of the field. Near the boundary, the detail coefficients will generally be larger than those in the free space \cite{harnish2022adaptive}, given that the function has relatively uniform smoothness throughout the entire domain. This is the well-known Runge phenomenon, and is most severe when an immediate neighboring grid point of the boundary ($\alpha^{L+1}$ or $\beta^{L+1}$) is odd. In this case, the computed detail coefficient at the corresponding grid point, $\gamma_{(\alpha^{L+1}-1)/2}^L$ or $\gamma_{(\beta^{L+1}-1)/2}^L$ will be amplified by a Lebesgue constant of magnitude $\mathcal{O}(2^N)$ compared to its free space counterpart, as detailed in the Appendix section~\ref{sec:detailwall}.

In some applications of the wavelet transform, such as when solving numerical PDEs, a Dirichlet boundary condition might be given along with the solution field. In this case, the boundary condition is incorporated into the wavelet transform to reduce the magnitude of the detail coefficients near the boundary. For example, assume that $\alpha^{L+1}$ is odd, and the boundary condition $f(a)$ is given. The extrapolation operator in Step \ref{sec:extra-without-bc} of the forward wavelet transform is then modified by incorporating the boundary value $f(a)$. Specifically, a polynomial of degree $N-1$ is constructed using $f(a)$ and the scaling coefficients
$\lambda_{\alpha^L}^L, \lambda_{\alpha^L+1}^L, \cdots, \lambda_{\alpha^L+N-2}^L$. When a Neumann boundary condition is imposed, it can first be converted into Dirichlet BC using a polynomial interpolant \cite{gabbard2022immersed}, followed by exactly the same procedure described above.

We refer to this extrapolation scheme with boundary condition as Type II extrapolation, which is illustrated in Figure \ref{fig:extrapolation} (middle). Type II extrapolation is only used when the closest grid point to the immersed boundary or interface is odd; when it is even, the boundary intersection location and the grid point can be arbitrarily close to each other, leading to a high condition number of the Vandermonde matrix and hence numerical instability. Moreover, the Runge phenomenon in this situation is not as severe for moderate wavelet order $N$, so Type I extrapolation is sufficient in most practical situations.

Figure \ref{fig:compare_wavelets} compares the scaling functions near the boundary that correspond to Type I and Type II extrapolation, respectively. The functions are generated by using the definition of interpolating wavelets -- through recursive polynomial interpolation, as described in \cite{donoho1999deslauriers, sweldens2005building}.

\begin{figure}[htbp]
  \centering

  \begin{subfigure}[t]{0.48\linewidth}
    \centering
    \includegraphics[width=\textwidth]{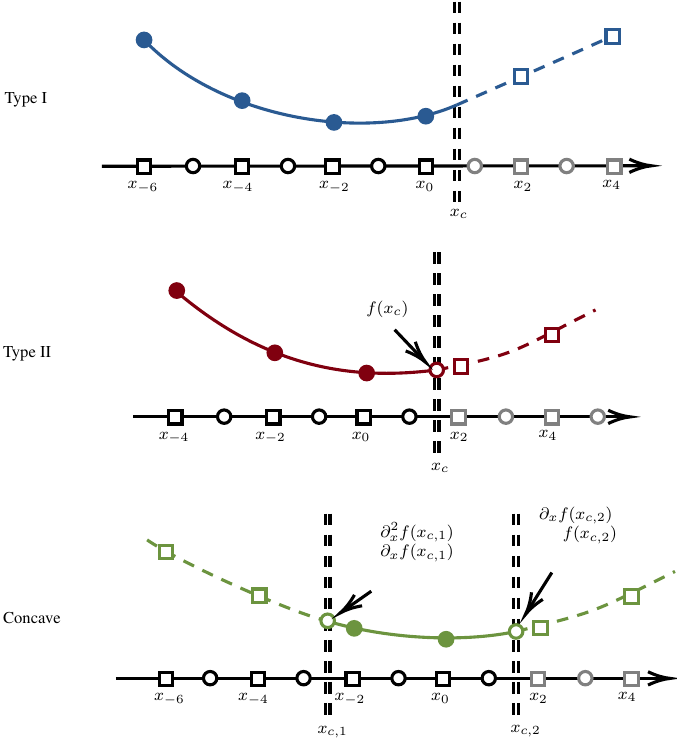}
    \caption{Extrapolation schematics}
    \label{fig:extrapolation}
  \end{subfigure}\hfill
  \begin{subfigure}[t]{0.48\linewidth}
    \centering
    \includegraphics[width=\linewidth]{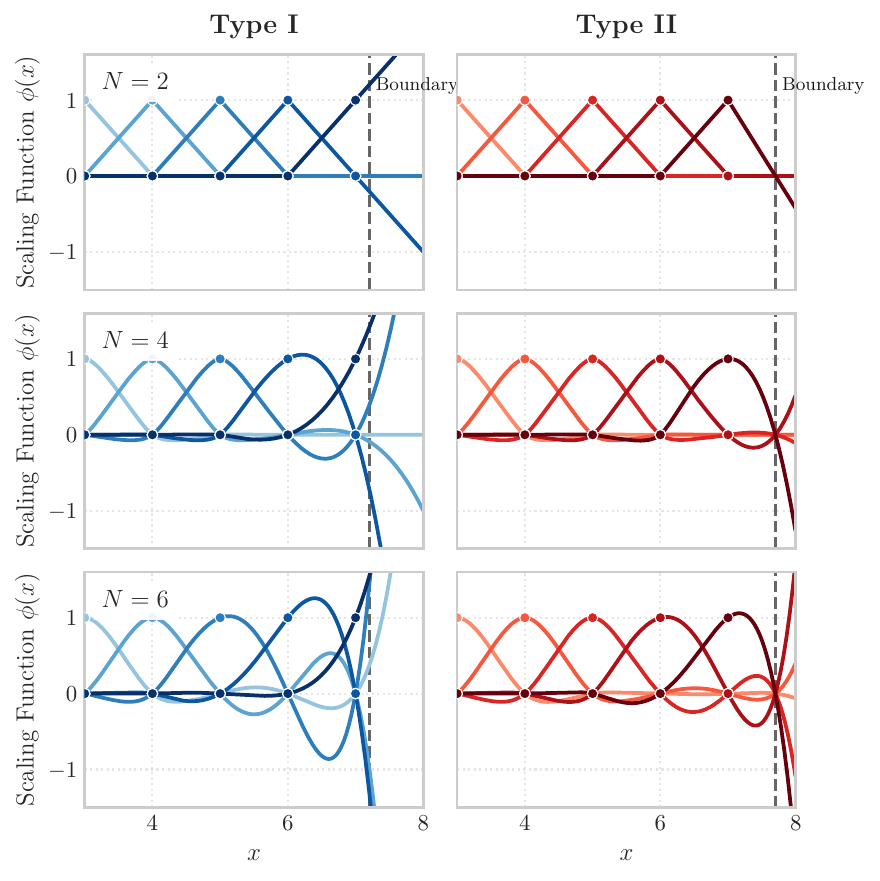}
    \caption{Arbitrary-boundary interpolation}
    \label{fig:compare_wavelets}
  \end{subfigure}

  \caption{Left: Three types of extrapolation strategies in the proposed wavelet transform algorithm. For a sufficient number of grid points, when the immediate neighbor of the control point has an even (odd) index, Type I (Type II) extrapolation is used. For an insufficient number of grid points (concave sections in $>1$D), a Hermite-like extrapolation is used. Right: Modified scaling functions near the boundary for Type I (blue) and Type II (red) extrapolations, for different wavelet orders $N$ (top to bottom).}
\end{figure}

\subsection{One-dimensional wavelet transforms on narrow intervals}
\label{sec:wt1d_narrow}
We now consider the scenario where the 1D interval contains fewer than $2N$ grid points inside $\Omega$, \textit{i.e.,} $|\mathcal{X}_\Omega^{L+1}|< 2N$. Such cases readily occur in higher dimensions when considering concave geometry sections, as sketched in Fig.~\ref{fig:concave} below. In such cases one can either revert to a lower-order wavelet transform following the same principles as sketched above, which would imply an inconsistency in grid adaptation thresholds between interior and near-boundary regions. Here instead high-order grid adaptation is maintained using Hermite-like interpolants to construct the ghost point values needed across the boundary. The Hermite-like polynomials require information on the function value and derivative(s) at the immersed boundary; details on how those are constructed are explained in the next section. For now, we assume the function values and derivative(s) exist at the boundary.

As an example, consider the case sketched in Fig.~\ref{fig:extrapolation} (bottom). In this case, there are two available grid points between $x_{c,1}$ and $x_{c,2}$. Suppose the wavelet is sixth order; then a fifth degree polynomial needs to be constructed to consistently extrapolate ghost values. In this case, $6-2=4$ degrees of freedom remain, which need to be filled using boundary values and derivatives of $f$ along the $x$-axis evaluated at the control points. Since in this example $x_{-2}$ is an even index, the boundary value $f(x_{c,1})$ is not used; on the right, $x_1$ is odd so $f(x_{c,2})$ is used. The remaining required function derivatives are evenly split between two control points. In the case shown in Figure \ref{fig:extrapolation}, this implies $\partial_x f(x_{c,1})$ and $\partial^2_x f(x_{c,1})$ are required at the left control point, and $f(x_{c,2})$ and $\partial_x f(x_{c,2})$ are required at the right control point. For 1D cases, estimating these boundary values and derivatives to the required order of accuracy from the interior function values poses the same challenges as constructing the interpolating wavelet polynomial itself. However, in higher dimensions multivariate polynomials are used to estimate the boundary derivatives, which will be further discussed in Section~\ref{sec:concave} below.

For now, we assume that the required boundary values/derivatives are known. We then proceed by constructing a one-dimensional Hermite-like interpolant $q(x)$ of polynomial degree $N-1$, which satisfies the constraints based on available grid values and the field derivatives on the boundary. For example, in the case shown in Figure \ref{fig:extrapolation} (bottom), the 1D interpolant needs to satisfy
\begin{equation}
  \begin{cases}
    q(x_{-2}) = f(x_{-2})                 \\
    q(x_0) = f(x_{0})                     \\
    q'(x_{c, 1}) = \partial_x f(x_{c, 1})    \\
    q''(x_{c, 1}) = \partial_x^2 f(x_{c, 1}) \\
    q(x_{c, 2}) = f(x_{c, 2})                \\
    q'(x_{c, 2}) = \partial_x f(x_{c, 2}),    
  \end{cases}
\end{equation}
These six conditions, provided that they are linearly independent, determine a unique fifth degree polynomial $q(x)$. Ghost values outside of the domain are then obtained by evaluated $q(x)$ at the ghost locations. Notice that the ghost point values to the left of $x_{c,1}$ and those to the right of $x_{c,2}$ come from the same polynomial interpolant $q(x)$. This is a crucial step as it guarantees that the detail coefficients $\gamma^L$ in the ghost region are uniformly zero, thus maintaining a lossless wavelet transform for all field values.

With the ghost values constructed, the 1D \textit{lifting} and \textit{dual lifting} steps described above in Section \ref{sec:FWT} are performed.

\section{Interpolating Wavelet Transforms on Arbitrary 2D Domains} \label{sec:wtag}
In this section we discuss how to perform forward and inverse wavelet transforms in complex 2D domains. Convex domain sections, which are handled through straightforward tensor products of `sufficiently wide` 1D wavelet transforms, are discussed first. We then treat concave domain sections, where narrow intervals may occur that require boundary function derivatives.

\subsection{Convex geometries}\label{sec:dimsplit}
We first focus on the simple case of convex boundaries. Let $C=[0, 1)^2$ be our computational domain with periodic boundary conditions. Let $E\subset C$ be a convex set in this domain.
The field $f(\mathbf{x})$ is defined on $\Omega = C \backslash E$, and the goal is to perform a consistent two-dimensional wavelet transform on the field $f$.

First, the computational domain $C$ is discretized using a Cartesian grid with grid spacing $h = 2^{-(L+1)}$. The grid points are divided into two sets:
\begin{align}
  \mathcal{X}_{\Omega}^{L+1} & = \left\{\mathbf{x}_{i,j}: \mathbf{x}_{i,j} \in \Omega\right\}, \\
  \mathcal{X}_{E}^{L+1}      & = \left\{\mathbf{x}_{i,j}: \mathbf{x}_{i,j} \in E\right\}.
\end{align}
In addition, the \textit{control points} are defined as the intersections of the domain boundary $\Gamma = \partial \Omega$ and the Cartesian grid lines, which are denoted as
\begin{equation}
  \mathcal{X}_c^{L+1} = \left\{\mathbf{x}_c : \mathbf{x}_c = (x_c, y_c)\in \partial \Omega \text{ and } (x_c=ih \text{ or } y_c = ih, i\in\mathbb{N})\right\}.
\end{equation}

Wavelet transforms in higher dimensions are simply tensor products of the 1D wavelet transforms, which motivates a dimension-split approach to treating immersed geometries. First, along each horizontal grid line (in the $x$ direction), a 1D wavelet transform is performed based on the scheme proposed in Section \ref{sec:FWT}. The role of control points in 2D is exactly the same as the role of the boundary points in performing the one-dimensional wavelet transform. Further, because the domain is convex, each 1D wavelet transform adjacent to a boundary is guaranteed to have a sufficient number of grid points to construct the Type~I or Type~II polynomials as described in Section~\ref{sec:usebc}. After performing the $x$-direction wavelet transforms, the result is still a two-dimensional field, with scaling coefficients stored in locations $\{\mathbf{x}_{i,j}: i \text{ is even and } \mathbf{x}_{i,j}\in \mathcal{X}_{\Omega}\}$, and detail coefficients stored at locations $\{\mathbf{x}_{i,j}: i \text{ is odd and } \mathbf{x}_{i,j}\in \mathcal{X}_{\Omega}\}$. Subsequently, a second 1D wavelet transform is performed along each vertical grid line (in the $y$ direction). Vertical grid lines are classified into those storing $x$-scaling coefficients and those storing $x$-detail coefficients. For a smooth field $f \in \mathcal{C}^\infty$, the values of the $x$-detail coefficients are already of magnitude $\mathcal{O}(h^N)$, where $N$ is the order of the wavelet. Therefore, it is not necessary to perform polynomial extrapolations along the grid lines that store $x$-detail coefficients. Instead, when encountering the domain boundary along those grid lines, the ghost points on the other side of the interface are simply filled with zero values. For grid lines that store $x$-scaling coefficients, ghost point values are filled using the 1D polynomial extrapolation approach as described in Section \ref{sec:FWT}. After both 1D wavelet transforms, the 2D field is filled with the following data:
\begin{enumerate}
  \item $\lambda^{L}$, which are defined on the set $\{\mathbf{x}_{i,j}: \mathbf{x}_{i,j}\in \mathcal{X}_\Omega^{L+1}, i,j \text{ are even}\}$;
  \item $\gamma_x^{L}$, which are defined on the set $\{\mathbf{x}_{i,j}: \mathbf{x}_{i,j}\in \mathcal{X}_\Omega^{L+1}, i \text{ is odd, }j \text{ is even}\}$;
  \item $\gamma_y^{L}$, which are defined on the set $\{\mathbf{x}_{i,j}: \mathbf{x}_{i,j}\in \mathcal{X}_\Omega^{L+1}, i\text{ is even, }j \text{ is odd}\}$;
  \item $\gamma_{x,y}^{L}$, which are defined on the set $\{\mathbf{x}_{i,j}: \mathbf{x}_{i,j}\in \mathcal{X}_\Omega^{L+1}, i,j \text{ are odd}\}$.
\end{enumerate}
This completes the wavelet transform on convex domains.

\begin{figure}[htbp]
  \centering
    \includegraphics[width=1\linewidth]{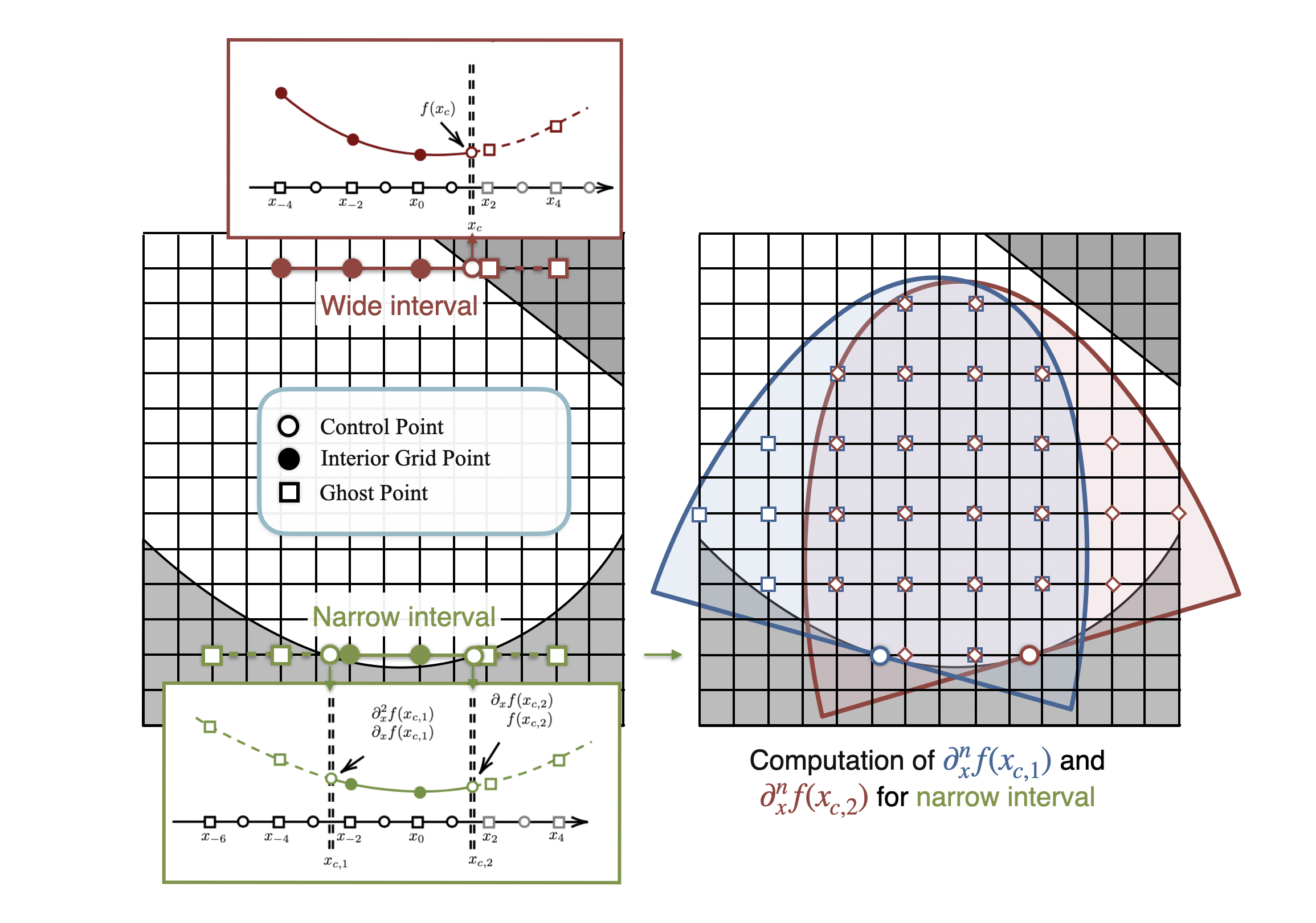}
  \caption{Left: When the boundary is convex (top, red line) a `wide interval' wavelet transform can be performed. For a concave section (bottom, green) there might be insufficient grid points to apply the 1D extrapolation stencil along a grid line. Right: when such a concavity occurs, we employ the half-elliptical least square fit strategy. For each control point elliptical stencils are used to construct polynomial approximations to the high order derivatives on the boundary. These derivatives are in turn used to perform a Hermite-like 1D polynomial extrapolation along the grid line, which enables the 1D wavelet transform on the small interval between the two control points.}
    \label{fig:concave}  
\end{figure}

\subsection{Concave geometries}\label{sec:concave}
For concave geometries in 2D or 3D, there may not be sufficient grid points along a grid line to perform polynomial extrapolations or wavelet transforms to the required order of accuracy. Section~\ref{sec:wt1d_narrow} describes how, for such narrow intervals, a Hermite-like polynomial is constructed to fill in ghost points on either side of the interface. Here we discuss how the required boundary function derivatives for constructing such polynomials are obtained in higher dimensions, relying heavily on the immersed interface framework described in \cite{gabbard2022immersed, gabbard2023high, gabbard2024high, gabbard2025high}.

First, in general, we need to detect concavity and pair control points. For simplicity, we focus on an $x$-direction wavelet transform in this section, so that each control point $\mathbf{x}_{c, 1}$ is located on a horizontal grid line. Along that grid line, the number of grid points that are available for a 1D $N-1$-th degree polynomial extrapolation are counted. If the number of available grid points is less than $N$, additional boundary information is required to perform the wavelet transform. Moreover, in this case an adjacent control point $\mathbf{x}_{c, 2}$ is located on the same grid line, with $||\mathbf{x}_{c,1}-\mathbf{x}_{c,2}||<2Nh$ where $h$ is the grid spacing. The existence of such an $\mathbf{x}_{c,2}$ is guaranteed by the continuity of the boundary $\partial \Omega$.

Next, partial derivatives $\partial_x^j f(\mathbf{x}_{c,1})$ and $\partial_x^j f(\mathbf{x}_{c,2})$ are computed on each pair of control points $(\mathbf{x}_{c,1}, \mathbf{x}_{c,2})$ associated with a `narrow interval'. The number of higher order derivatives that are needed depends on the geometry as well as the order of interpolation, as described in Section~\ref{sec:wt1d_narrow} above. To evaluate each one of these partial derivatives, a multivariate least squares polynomial fit is used, similar to the IIM finite-difference discretization proposed in \cite{gabbard2023high, gabbard2024high}. To do so, each control point relies on the set of grid points that are (1) within $\Omega$, and (2) fall within the region of a half ellipsoid centered at the control point with a major radius $hr_n$ along the interface normal and a minor radius $hr_t$ along the tangential direction of the boundary (Figure \ref{fig:concave}, right). These grid points are selected using the strategy proposed in \cite{gabbard2023high}. For each control point $\mathbf{x}_c$, we denote the distance between it and an arbitrary point $\mathbf{x}$ as $\boldsymbol{\xi}_c(\mathbf{x}) = \mathbf{x} - \mathbf{x}_c$. A distorted distance function is then defined as

\begin{equation}
  d_c(\mathbf{x}) = \sqrt{\boldsymbol{\xi}^T_c(x)\Sigma \boldsymbol{\xi}_c(x)}\quad \text{with}\quad \Sigma = \frac{1}{h^2 r_n^2} \mathbf{n}_c \mathbf{n}_c^T + \frac{1}{h^2 r_t^2}(I-\mathbf{n}_c \mathbf{n}_c^T).
\end{equation}

This function $d_c(\mathbf{x})$ ranges from zero at the control point to one at the edge of the ellipsoid. This leads to the definition of the following set:

\begin{equation}
  \mathcal{E}_c^{L+1} = \left\{\mathbf{x}_{i,j} \in \mathcal{X}^{L+1}_{\Omega}: \mathbf{\xi}_c (\mathbf{x}_{i,j})\cdot \mathbf{n}_c\geq 0, d_c(\mathbf{x}_{i,j})\leq 1, \text{and } i,j \text{ are even.}\right\}
\end{equation}

Using the set of points defined above, which are essentially grid points that fall within the half ellipsoid and whose indices $i, j$ are even,  a high order least square polynomial fit to our original field $f$ is constructed:
\begin{equation}
  p(\mathbf{x}) = \arg \min_{g(\mathbf{x})\in \mathcal{P}_{2, N-1}} \sum_{\mathbf{x}_{i,j}\in \mathcal{E}_c^{L+1}}\left(g(\mathbf{x}_{i,j})-f(\mathbf{x}_{i,j})\right)^2
\end{equation}
where $\mathcal{P}_{2, N-1}$ denotes the space of polynomials of 2 variables that are of degree less than or equal to $N-1$. As long as the basis of $\mathcal{P}_{2, N-1}$ is linearly independent over the set $\mathcal{E}_c^{L+1}$, this interpolant $p(\mathbf{x})$ is unique and satisfies
\begin{equation}
  \partial^{\boldsymbol{\alpha}}p(\mathbf{x}) = \partial^{\boldsymbol{\alpha}}f(\mathbf{x})+\mathcal{O}\left(h^{N-|\boldsymbol{\alpha}|}\right)
\end{equation}
for any evaluation point $\mathbf{x}$ and multi-index $\boldsymbol{\alpha}$. Therefore,  the $x$-derivatives of $p(\mathbf{x})$ approximate the derivatives of $f(\mathbf{x})$ at the control point $\mathbf{x}_c$ with the following relation:
\begin{equation}
  \partial^{n}_xp(\mathbf{x}_c) = \partial^{n}_xf(\mathbf{x}_c)+\mathcal{O}\left(h^{N-n}\right). \label{eq:hermiteerror}
\end{equation}

Using the specialized elliptical stencils for control points $\mathbf{x}_{c,1}$ and $\mathbf{x}_{c,2}$, the required boundary values and derivatives are estimated as $\partial_x^j f(x_{c,1})\approx \partial_x^j p_1(\mathbf{x}_{c,1})$ and $\partial_x^j f(\mathbf{x}_{c,2})\approx \partial_x^j p_2(\mathbf{x}_{c,2})$. Here $p_1$ and $p_2$ are generally two different polynomials, because two different elliptical stencils are used centered on $\mathbf{x}_{c,1}$ and $\mathbf{x}_{c,2}$ respectively. Nevertheless, as noted in Section~\ref{sec:wt1d_narrow} above, after constructing the Hermite-like polynomial, the ghost point values to the left of $\mathbf{x}_{c,1}$ and those to the right of $\mathbf{x}_{c,2}$ come from the same polynomial interpolant $q(x)$. This guarantees that the detail coefficients outside of the domain remain zero and the wavelet transform is lossless.

\section{Temporal grid adaptation for PDEs in complex domains} \label{sec:coupling}
Here we describe how to couple the proposed interpolating wavelet transform on arbitrary 2D domain geometries with PDE solvers that are based on sharp immersed discretizations. This coupling enables the solver to choose a proper level of resolution based on the smoothness of the solution field, hence maximizing the efficiency while targeting an accuracy based on a user-determined threshold. The fully coupled algorithm is described as follows.

Denote the solution field at resolution level $L$ as $u^L(\mathbf{x}, t)$, with the initial resolution $L_0=L(t=0)$. Before initiating the solver, a coarsening threshold $\varepsilon_c>0$ and a refinement threshold $\varepsilon_r> \varepsilon_c$ are chosen. Every $n$ time steps,  a forward wavelet transform is performed on $u^L(\mathbf{x}, t)$, yielding $\lambda^{L-1}(\mathbf{x}, t)$ and $\gamma^{L-1}(\mathbf{x}, t)$. The field is then coarsened or refined based on the following criteria (with $k$ a parameter discussed below):
\begin{enumerate}
  \item If $||\gamma^{L-1}(\mathbf{x}, t)||_\infty < 2^{-k(L-L_0)}\varepsilon_c $, the solution field is coarsened by setting $u^{L-1}(\mathbf{x}, t)=\lambda^{L-1}(\mathbf{x}, t)$;
  \item If $||\gamma^{L-1}(\mathbf{x}, t)||_\infty \ge 2^{-k(L-L_0)}\varepsilon_r$, the field is refined by setting $\gamma^L(\mathbf{x}, t)=0$, $\lambda^L(\mathbf{x}, t)=u^L(\mathbf{x}, t)$, and performing an inverse wavelet transform to obtain $u^{L+1}(\mathbf{x}, t) = \lambda^{L+1}(\mathbf{x}, t)$;
  \item If $ 2^{-k(L-L_0)}\varepsilon_c \le ||\gamma^{L-1}(\mathbf{x}, t)||_\infty < 2^{-k(L-L_0)}\varepsilon_r$, the field stays at the current resolution level $L$.
\end{enumerate}

Preventing `flip-flopping', i.e.\ successive alternating compression/refinement of the same region without significant changes in the field, is achieved by ensuring $\varepsilon_r \geq 2^N \varepsilon_c$, with $N$ the order of the wavelet transform \cite{gillis2022murphy}.

The introduction of the parameter $k$ above leads to a family of adaptation strategies. It is common to set $k=0$, which gives a level-independent adaptation criterion that is robust in practice. However, a formal analysis shows that taking $k$ equal to the spatial order of the PDE to be solved (level-dependent strategy), leads to an explicit relationship between the refinement threshold $\varepsilon_r$ and the numerical error made during the simulation for linear PDEs.  This is proved with the following theorem.

\begin{proposition} \label{prop:3}
  Suppose a linear, well-posed $k$-th order PDE
  \begin{equation}
    u_t=\mathcal{L}u
  \end{equation} is semi-discretized by an $M$-th order, stable, immersed interface finite difference scheme. If the exact solution $u$ is $C^\infty$ smooth, then the wavelet collocation adaptive resolution scheme with $N = M+k$-th order wavelet and level-dependent adaptation strategy controls the numerical error at time $T$ by
  \begin{equation}
    ||u-u_h||_\infty (T)\leq \mathcal{C}[u](T) \varepsilon_r
  \end{equation} where $\mathcal{C}[u](T)$ is a constant determined only by the exact solution $u$ and time $T$, and $\varepsilon_r$ is the refinement threshold. \label{thm:errorcontrol}
\end{proposition}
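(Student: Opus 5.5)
The approach is a Lax-equivalence style argument: consistency plus stability gives convergence. The wavelet threshold will supply the link between the local truncation error at the current resolution and $\varepsilon_r$.

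First, I fix the resolution level $L(t)$ selected by the adaptation algorithm. On any time interval where the level is constant, the semi-discrete scheme $\dot u_h=\mathcal{L}_h u_h$ has local truncation error $\tau^L=\mathcal{L}_h u-\mathcal{L}u=\mathcal{O}(h_L^M)$. Here $h_L=2^{-L}$, and the constant depends only on derivatives of $u$ up to order $M+k$, since an $M$-th order discretization of a $k$-th order operator has error $C\,|\partial^{M+k}u|\,h^M$. By the same smoothness, the detail coefficients of the $N=(M+k)$-th order wavelet satisfy $|\gamma^{L-1}|\approx c_N|\partial^{N}u|\,h_L^{N}$. For this I use the result quoted above that $|\gamma^L|=\mathcal{O}(h^N)$, extended uniformly up to the immersed boundary by the consistency of the Type I/II and Hermite-like extrapolations of Sections~\ref{sec:wt1d}--\ref{sec:concave}.

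The level-dependent criterion then ensures that, while the solution sits at level $L$, we have $\|\gamma^{L-1}\|_\infty<2^{-k(L-L_0)}\varepsilon_r$ (otherwise we would refine). Dividing by $h_L^{k}\propto 2^{-kL}$ gives
\begin{equation}
  h_L^{M}\,\|\partial^N u\|\lesssim 2^{kL_0}\varepsilon_r .
\end{equation}
Hence $\|\tau^L\|_\infty\le C_1[u]\,\varepsilon_r$, with $C_1$ depending on ratios of derivative norms of $u$ and on $L_0$.

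The main obstacle is exactly this step. The wavelet coefficients only give an upper bound on $h^N$ times a directional derivative combination, whereas the truncation error involves possibly different derivative combinations. The first thing I would try is to define $\mathcal{C}[u]$ through $\sup_t \|\partial^{M+k}u\|/\inf_t \max|\partial^N u|$ and to state a nondegeneracy assumption. Failing that, I would use the fact that refinement stops at a finest level determined by $u$, which makes $h_L^M$ bounded by a $u$-dependent multiple of $\varepsilon_r$.

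Second, the transfer errors at adaptation instants must be accounted for. Refinement with zero details reproduces $u$ to $\mathcal{O}(h^N)$ through the interpolation, and that error is bounded by the discarded detail magnitudes. Coarsening discards details below $2^{-k(L-L_0)}\varepsilon_c\le\varepsilon_r$. Each event therefore injects an error of at most $C\varepsilon_r h^{k}$. Since adaptation occurs every $n$ steps, I would instead bound these injections by their rate per unit time, and this should be absorbed into the truncation error.

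Finally, I write $e=u-u_h$, which satisfies $\dot e=\mathcal{L}_h e+\tau$ between adaptations plus jumps at adaptations. Stability of the immersed scheme gives $\|e^{t\mathcal{L}_h}\|\le Ke^{\omega t}$ uniformly in $h$, and I assume this also holds across level changes. Duhamel's formula then yields
\begin{equation}
  \|e(T)\|_\infty\le K e^{\omega T}\Big(\int_0^T\|\tau\|\,dt+\sum\text{jumps}\Big)\le \mathcal{C}[u](T)\,\varepsilon_r .
\end{equation}
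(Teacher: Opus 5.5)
Your overall architecture (Duhamel, plus a truncation-error bound extracted from the threshold via $N=M+k$) matches the paper's. However, there is a genuine gap in the step where you convert the adaptation criterion into $\|\tau^L\|_\infty\le C_1[u]\,\varepsilon_r$. Your $\tau^L=\mathcal{L}_hu-\mathcal{L}u$ is the truncation error of the \emph{exact} solution. You control it through $|\gamma^{L-1}|\approx c_N|\partial^Nu|h_L^N$, which are the detail coefficients of $u$. The algorithm, however, thresholds the detail coefficients of the \emph{computed} field, and $\gamma[u_h]=\gamma[u]-\gamma[e]$. All you know about the last term is $\|\gamma[e]\|_\infty\le C\|e\|_\infty$, since the filters are bounded. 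The criterion therefore only gives $h_L^N|\partial^Nu|\lesssim 2^{kL_0}h_L^k\varepsilon_r+C\|e\|_\infty$, and dividing by $h_L^k$ yields $\|\tau^L\|_\infty\lesssim\varepsilon_r+C\|e\|_\infty h_L^{-k}$. Even under an inductive hypothesis $\|e\|_\infty\le\mathcal{C}\varepsilon_r$, the last term is of size $\varepsilon_r h_L^{-k}$, so the bound does not close. The dangerous case is exactly when $\gamma[e]$ nearly cancels $\gamma[u]$: the computed field has lost fine-scale content that $u$ still has, and the criterion never fires. Your nondegeneracy assumption addresses the mismatch of derivative combinations, not this.

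The paper's proof is set up to avoid this. It is an \emph{a posteriori} argument with $e=u-\tilde u_h$, where $\tilde u_h$ interpolates the computed solution, so that $\dot e=\mathcal{L}e+\tilde\tau_h$ and $\tilde\tau_h=\mathcal{L}\tilde u_h-\mathcal{L}^{L}\tilde u_h$ is the truncation error of the numerical solution itself. Then $\tilde\tau_h\approx C_\tau\tilde u_h^{(N)}h^M$ and the thresholded $\gamma\approx C_\gamma\tilde u_h^{(N)}h^N$ are expansions of the same function, and $N=M+k$ gives $\|\tilde\tau_h\|_\infty\le C\varepsilon_r$ directly. The price is implicitly assuming $\tilde u_h$ is smooth enough to Taylor-expand. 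The error is propagated with the continuous semigroup, $\|\exp(\mathcal{L}t)\|_\infty\le c\,e^{\omega t}$, which comes from well-posedness and is level-independent. This also sidesteps your extra assumption that discrete stability holds uniformly across level changes, which is not automatic: for $N\ge 4$ the refinement interpolation has $\ell^\infty$ norm larger than one.

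To keep your a priori route, you would need an added hypothesis such as $\|\gamma[e]\|_\infty\lesssim h_L^k\varepsilon_r$, for example from a smooth asymptotic error expansion $e=h^Me_M+\dots$. Your explicit accounting of transfer errors at adaptation instants is more careful than the paper's induction, which ignores them. However, the ``rate per unit time'' step only yields $\mathcal{O}(\varepsilon_r)$ if the number of actual level changes is controlled, or if $\Delta t\gtrsim h^k$; you should say so.
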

\textbf{Proof:} Denote $\mathcal{L}^{L(t)}$ as the discretization of the linear operator $\mathcal{L}$ at the resolution level $L$. Notice that since the spatial resolution changes adaptively over time, $L(t)$ and thus the discretization $\mathcal{L}^{L(t)}$ themselves are functions of time $t$. Let $u_h^{L(t)}$ denote the numerical solution defined on the grid points of resolution level $L$.

The numerical error on the entire domain is defined as
\begin{equation}
  e(\mathbf{x}, t)=u(\mathbf{x}, t) - \tilde{u}_h(\mathbf{x}, t).
\end{equation} Here $\tilde{u}_h$ denotes the interpolant of $u_h$ that gives the smallest pointwise error against the exact solution $u$, so that 
\begin{equation}
  ||e(\mathbf{x}, t)||_\infty \sim \max_{i}|u(\mathbf{x}_i, t)-u_h(\mathbf{x}_i, t)|,
\end{equation} where $\mathbf{x}_i$ are the discrete grid points.

Next, split up the time interval $[0, T]$ into several subintervals $\mathcal{T}_1 = [0, t_1], \mathcal{T}_2 = [t_1, t_2], \cdots , \mathcal{T}_n = [t_{n-1}, t_n]$ where $t_n=T$. On each interval $\mathcal{T}_i$, the spatial resolution level $L(t)=L_i$ remains constant. The proposition can then be proved by induction: assume that the statement is true up to time $t_i$, i.e.\
\begin{equation}
  ||e^{L_i}(\cdot, t_i)||_\infty \leq \mathcal{C}(t_i) \varepsilon_r. \label{eq:errorbound}
\end{equation}
Given that $e(\cdot, 0) = 0$, the only step left is to show that on the interval $[t_i, t_{i+1}]$, the numerical error remains bounded by $\mathcal{C}[u](t) \varepsilon_r$. This is shown by a standard \textit{a posteriori} error analysis below.

The evolution of the numerical error on the time interval $\mathcal{T}_{i+1}$ satisfies
\begin{align}
  \begin{split}
    \frac{\mathrm{d}}{\mathrm{d}t}e(\cdot, t) & = \mathcal{L}u - \mathcal{L}^{L_{i+1}}u_h^{L_{i+1}}                                                   \\
    & =\mathcal{L}u - \mathcal{L}\tilde{u}_h + \mathcal{L}\tilde{u}_h - \mathcal{L}^{L_{i+1}}u_h^{L_{i+1}}  \\
    & =\mathcal{L}(u-\tilde{u}_h)+\left(\mathcal{L}\tilde{u}_h - \mathcal{L}^{L_{i+1}}u_h^{L_{i+1}}\right).
  \end{split} \label{eq:errorde}
\end{align} Since $\tilde{u}_h(\mathbf{x}_i, t)=u_h^{L_{i+1}}(\mathbf{x}_i, t)$ on all the grid points $\mathbf{x}_i$,
\begin{equation}
  \mathcal{L}\tilde{u}_h - \mathcal{L}^{L_{i+1}}u_h^{L_{i+1}} = \mathcal{L}\tilde{u}_h - \mathcal{L}^{L_{i+1}}\tilde{u}_h = \tilde{\tau}_h
\end{equation} 
is the local truncation error of the interpolant $\tilde{u}_h$. Substituting the definition of $\tilde{\tau}_h$ in to the differential equation of the numerical error, and solving, yields
\begin{equation}
  e(\cdot, t)= \exp\left(\mathcal{L}(t-t_i)\right)e(\cdot, t_i)+\int_{t_i}^t \exp(\mathcal{L}(t-s))\tilde{\tau}_h(\cdot, s) \mathrm{d}s.
\end{equation}
Taking the norm of both sides leads to
\begin{align}
  \left|\left|e(\cdot, t)\right|\right|_\infty \leq & \qquad \left|\left|\exp(\mathcal{L}(t-t_i))\right|\right|_{\infty}||e(\cdot, t_i)||_\infty \nonumber                                                   \\
  & +\int_{t_i}^t \left|\left|\exp \left(\mathcal{L}(t-s)\right)\right|\right|_\infty ||\tilde{\tau}_h(\cdot, s)||_\infty \mathrm{d}s \label{eq:errbound1}
\end{align}
For any well-posed PDE, we assume that $\max \Re[\mathrm{eig}(\mathcal{L})]\leq 0$. This implies that there exists constants $\omega \leq 0$ and $c\geq 0$ such that
\begin{equation}
  \left|\left|\exp\left(\mathcal{L}t\right)\right|\right|_\infty \leq c\exp(\omega t)
\end{equation} 
where $c$ and $\omega$ are independent of time and mesh size $h$.

By standard truncation error analysis, the following holds
\begin{equation}
  \tilde{\tau}_h(\mathbf{x}_i, t) = C_\tau \tilde{u}_h^{N}(\mathbf{x}_i, t)h^{M} + \mathcal{O}(h^{M+1}). \label{eq:boundtau1}
\end{equation}
Meanwhile, as can be shown by a Taylor expansion, the detail coefficients of an $N$-th order wavelet can be written as
\begin{equation}
  \gamma^{L_{i+1}}(\mathbf{x}_i, t) = C_\gamma \tilde{u}_h^{N}(\mathbf{x}_i, t)h^{N} + \mathcal{O}(h^{N+1}).
\end{equation} 
Since $||\gamma^{L_{i+1}}||_\infty\leq 2^{-k(L_{i+1}-L_0)}\varepsilon_r=C_0 \varepsilon_r h^k$, it is concluded that
\begin{equation}
  ||C_\gamma \tilde{u}_h^{N}(\mathbf{x}_i, t)h^{M} + \mathcal{O}(h^{M+1})||_\infty \leq C_0 \varepsilon_r. \label{eq:boundtaylor}
\end{equation}
Combining Equation \ref{eq:boundtaylor} with Equation \ref{eq:boundtau1} and dropping high order terms gives
\begin{equation}
  \tilde{\tau}_h(\mathbf{x}_i, t) \leq C\varepsilon_r
\end{equation} 
for some constant $C$. Finally, from our initial assumption the following holds: $||e(\cdot, t_i)||_\infty \leq \mathcal{C}(t_i)\varepsilon_r$. Substituting these upper bounds into Equation \ref{eq:errbound1} yields
\begin{align}
  ||e(\cdot, t)||_\infty &\leq c\exp(\omega(t-t_i)) \mathcal{C}(t_i)\varepsilon_r + c C\varepsilon_r \int_{t_i}^t \exp(\omega(t-s)) \mathrm{d}s
\end{align}
If $\omega < 0$, this gives
\begin{align}||e(\cdot, t)||_\infty\le  \underbrace{\left(\exp(\omega(t-t_i))\mathcal{C}(t_i)+\frac{\exp(\omega(t-t_i))-1}{\omega}C\right)c}_{\mathcal{C}(t)}\ \varepsilon_r;
\end{align}
Otherwise if $\omega=0$, then
\begin{equation}
  ||e(\cdot, t)||_\infty\leq \underbrace{\left(\mathcal{C}(t_i) + (t-t_i)C\right)c}_{\mathcal{C}(t)}\ \varepsilon_r.
\end{equation}\\

With this, it is concluded that at time $t_{i+1}$, the numerical error is formally bounded by
\begin{equation}
  ||u-u_h||_\infty (t_{i+1})\leq \mathcal{C}(t_{i+1}) \varepsilon_r
\end{equation}
where $\mathcal{C}[u](t_{i+1})$ is a constant determined by the exact solution $u$ and time $t_{i+1}$. By induction, since $e(\cdot, 0) = 0$, the proposition holds for an arbitrary time $T$. $\Box$

\section{Numerical results} \label{sec:numresult}
In this section, we present the results and analysis of the numerical experiments for the proposed immersed interpolating wavelet transform algorithm. We first focus on a static field $f(\mathbf{x})$ and its wavelet transforms in an irregular, nonconvex domain in Section \ref{sec:epstest}. Then, we couple our wavelet transform with a finite difference immersed interface method developed in \cite{gabbard2023high, gabbard2024high} for adaptive-grid simulations of various PDEs with immersed bodies.
Examples are drawn from the heat equation (Section \ref{sec:parabolic}), as well as the Navier-Stokes equations (Section \ref{sec:NS}).

\subsection{Compression of a static field}\label{sec:epstest}
Consider a static sine wave field defined as
\begin{equation}
  f(x,y) = 100\sin(4\pi x)\sin(4\pi y).
\end{equation} 
The field is defined on a unit square domain masked out by a star shape $\Omega=[0,1]^2 \backslash E$. The boundary of the star shape is defined through a level set function
\begin{equation}
  \phi(r, \theta) = r-(0.3+0.04\sin(5\theta))
\end{equation}
where $r=\sqrt{(x-x_0)^2+(y-y_0)^2}$, $\theta=\arctan \left(\frac{y-y_0}{x-x_0}\right)$, and $[x_0, y_0]=[0.51, 0.51]$ is the center of the star.

We adopt the same procedure as in \cite{gillis2022murphy} to perform the compression test. Specifically, the field $f(x, y)$ is first initialized on the finest grid at resolution level $L_{\max}$, which is chosen to be a $1024\times 1024$ Cartesian grid. Four consecutive forward wavelet transforms are then applied to coarsen the field to resolution $64 \times 64$. At each resolution level, all the detail coefficients whose magnitude is below a given compression threshold $\varepsilon$ are discarded by setting them to zero. Finally, a hierarchy of inverse wavelet transforms is performed to obtain the compressed field at the maximum resolution level $L_{\max}$, denoted $P^{L_{\max}}[f]_{\varepsilon}(x, y)$. The infinite norm error of the compressed field $P^{L_{\max}}[f]_{\varepsilon}(x, y)$ compared to the original field $f(x,y)$ is defined and computed as:
\begin{equation}
  E_{\infty}(\varepsilon)=\left\lVert f(x,y)-P^{L_{\max}}[f]_{\varepsilon}(x, y) \right\rVert_{\infty}.
\end{equation}
Based on wavelet theory \cite{donoho1992interpolating, vasilyev2000second}, a linear relationship between $E_{\infty}$ and $\varepsilon$ is expected, so that $E_{\infty}=C\varepsilon$. Typically, the constant $C$ is of magnitude $\mathcal{O}(1)$.

\begin{figure}[t]
  \begin{subfigure}[c]{0.32\textwidth}
    \centering{\includegraphics[width=0.99\linewidth]{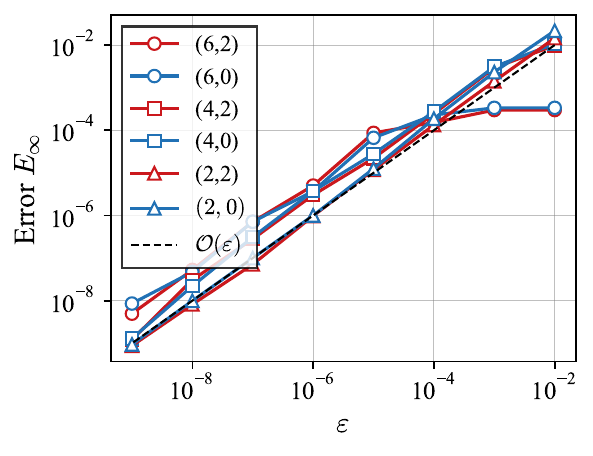}}%
    \subcaption{$L_\infty$ errors vs threshold $\varepsilon$. \label{fig:epsvserr}}
  \end{subfigure}
  \begin{subfigure}[c]{0.31\textwidth}
    \centering{\includegraphics[width=0.99\textwidth]{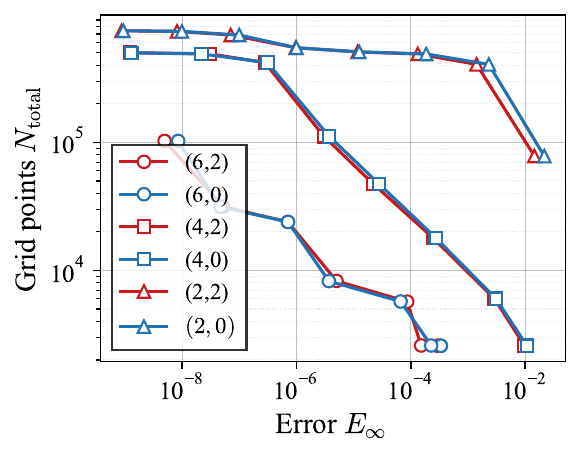}}%
    \subcaption{Compression vs error. \label{fig:errvsNp}}%
  \end{subfigure}
  \begin{subfigure}[c]{0.35\textwidth}
    \centering{\includegraphics[width=0.99\textwidth]{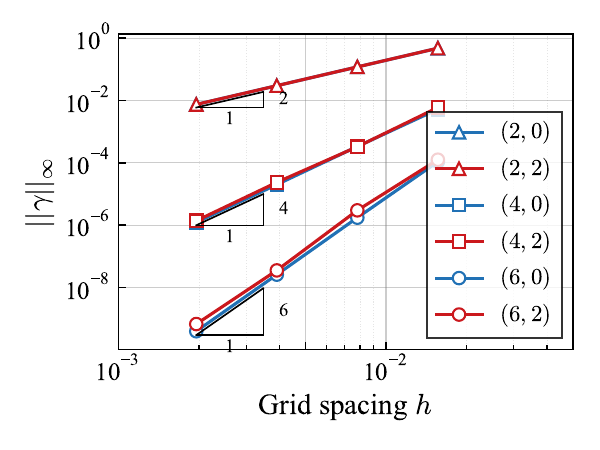}}%
    \subcaption{$||\gamma||_\infty$ vs grid spacing $h$. \label{fig:hvsmaxd}}%
  \end{subfigure}
  \caption{Errors, compression rates, and detail coefficient scaling for a static compression test of a smooth field with an immersed non-convex geometry, using lifted ($N.2$) and non-lifted ($N.0$) wavelets of varying order $N$.} \label{fig:eps_test}
\end{figure}

Figure \ref{fig:epsvserr} shows $E_\infty$ as a function of the compression threshold $\varepsilon$ for interpolating wavelets of order $(N, \tilde{N})$, with $N\in\{2,4,6\}$ and $\tilde{N}\in \{0,2\}$. The results confirm a linear relationship between $E_\infty$ and $\varepsilon$ for all tested interpolating wavelet compression on the complex domain. The constant $C$ varies for different wavelet orders, ranging from $\mathcal{O}(1)$ for the second order wavelets to $\mathcal{O}(10)$ for the sixth order wavelets. We also observe plateaus when $\varepsilon$ is relatively large, especially for high order wavelets, because the coarsest level is fixed. Figure \ref{fig:errvsNp} plots the number of active (non-zero) grid points against $E_\infty$, to reveal the compression rate of the wavelet transforms. The compression rate is highly dependent on the order of the wavelet, with sixth order wavelets being the most efficient in terms of compressing the given field $f$. Further, lifted wavelets ($\tilde{N}=2$) generally perform slightly better than their non-lifted counterparts. In Figure \ref{fig:hvsmaxd}, the maximum detail coefficient at each resolution level $||\gamma^{L_i}||_\infty$ is plotted as a function of the corresponding grid spacing $h=2^{-L_i}$. The magnitude of the detail coefficients scales as $\mathcal{O}(h^N)$ for all the interpolating wavelets tested, consistent with the theoretical analysis in Section \ref{sec:detailanal}. Overall, the results of the compression test are consistent with those obtained without immersed boundaries in \cite{gillis2022murphy}, demonstrating the effectiveness of our algorithm to handle wavelet transforms in complex domains.

Finally, Figure \ref{fig:sdview} shows the scaling and detail coefficients at the coarsest resolution level with grid spacing $h=1/64$ for the $(6,2)$ wavelet. In line with the analysis in Section \ref{sec:detailwall}, the detail coefficients near the boundary are slightly larger in magnitude compared to their free space counterparts. Nevertheless, all the first degree detail coefficients $\gamma_x$ and $\gamma_y$ scale as $\mathcal{O}(h^{N})$, and their magnitudes are of the same order of magnitude as the free-space detail coefficients. For $C^\infty$ smooth functions, the mixed detail coefficients $\gamma_{xy}$ in free space often decay as $\mathcal{O}(h^{2N})$, because the temporary detail coefficients obtained in the $x$-directional wavelet transform are still smooth across the domain, and they already have a $\mathcal{O}(h^N)$ magnitude. Near the boundary, however, this is not the case anymore because the temporary detail coefficients obtained through $x$-directional wavelet transform are discontinuous near the boundary, albeit a $\mathcal{O}(h^N)$ magnitude. As a consequence, $\gamma_{xy}$ near the boundary only decays as $\mathcal{O}(h^N)$, which is still consistent with the overall scaling $|\gamma| \sim \mathcal{O}(h^N)$.

\begin{figure}
  \centering
  \includegraphics[width=0.8\linewidth]{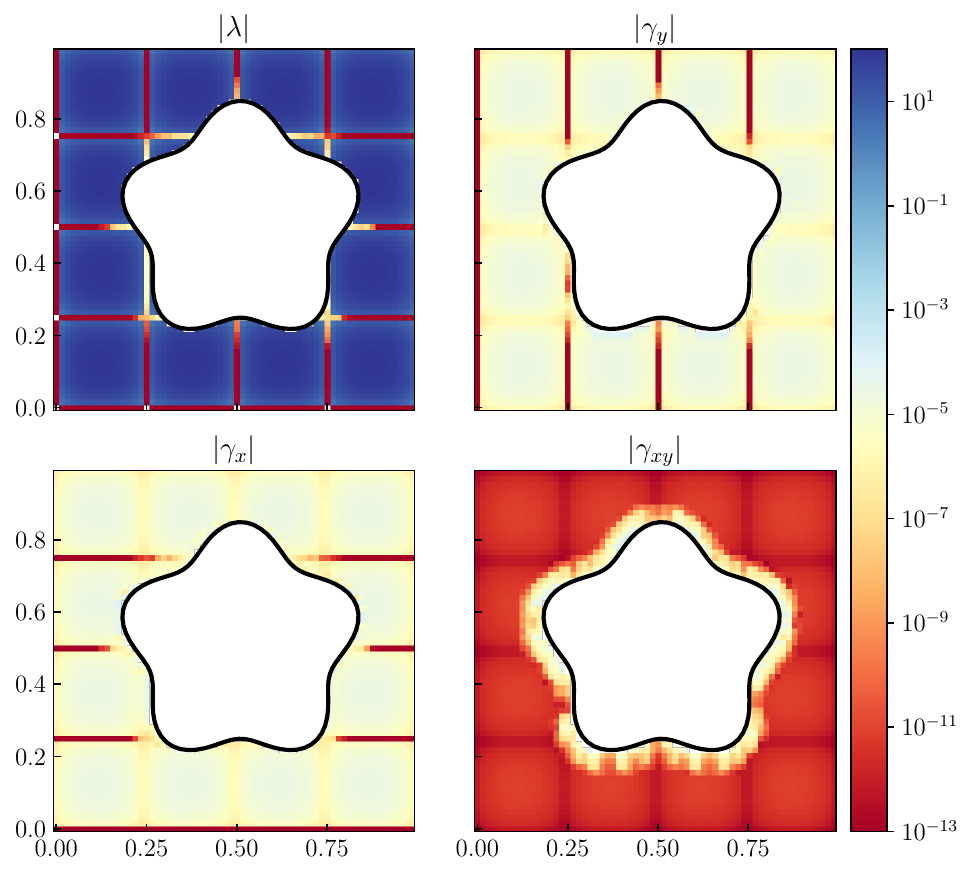}
  \caption{A (6, 2) order wavelet transform of a static field around a star shaped embedded geometry. The scaling and detail coefficients are plotted in log scale. The detail coefficients $|\gamma_x|$ and $|\gamma_y|$ have slightly larger magnitude near the star surface, due to the boundary amplification effect. $|\gamma_{xy}|$ decays as $\mathcal{O}(h^{2N})$ in free space, and $\mathcal{O}(h^N)$ near the boundary.}
  \label{fig:sdview}
\end{figure}

\subsection{Diffusion problem with Dirichlet boundary condition}\label{sec:parabolic}
In this test case, the wavelet transform algorithm is coupled with the immersed interface method \cite{gabbard2023high, gabbard2024high} using the strategy outlined in Section \ref{sec:coupling} to solve the heat equation
\begin{equation}
  u_t = \Delta u\quad \text{in }\Omega.
\end{equation} 
Here the diffusion problem is defined in a periodic unit box masked out by a star, with the following Dirichlet boundary condition:
\begin{equation}
  u|_{\partial \Omega}=\sin(5\theta(\mathbf{x}))\chi(5t)
\end{equation} 
where
\begin{equation}
  \theta(\mathbf{x}) = \arctan\left(\frac{y-y_0}{x-x_0}\right),
\end{equation}
and $\chi(t)$ is a smooth time-varying function
\begin{equation}
  \chi(t) =
  \begin{cases}
    0 \quad                                            & \text{if }t \leq 0 \\
    1 \quad                                            & \text{if }t \geq 1 \\
    \dfrac{\exp(-1/t)}{\exp(-1/t)+\exp(-1/(1-t))}\quad & \text{otherwise.}
  \end{cases}
\end{equation}

The reference solution at $t=5$, obtained through a $1024\times 1024$ simulation with an $M=4$th order finite difference spatial discretization and a third order low-storage Runge-Kutta scheme \cite{kennedy2000low}, is illustrated in Figure \ref{fig:dir_star} (left). Fixing the wavelet order at $(6, 2)$ and the ratio of the two thresholds to be $\varepsilon_r=100\varepsilon_c$, the refinement threshold is varied from $1.4\times 10^{-3}$ to $1.0 \times 10^{-7}$. We use the level-dependent adaptation threshold with $k=2$, and adapt the field every 10~steps. The pointwise error at $t=5.0$ is shown in \ref{fig:dir_star} (right). In addition, the $L_2$ and $L_\infty$ error of the numerical solutions are plotted against the refinement threshold $\varepsilon_r$ in Figure \ref{fig:dir_data1}, and against the maximum detail coefficient in Figure \ref{fig:dir_data2}. Both the $L_2$ and $L_\infty$ norm errors exhibit a linear relationship to the manually chosen error threshold, confirming  that the immersed wavelet collocation method restricts the numerical error within the manually set thresholds, up to a constant determined by time and the exact solution. Finally, the evolution of maximum detail coefficients is plotted for each simulation in Figure \ref{fig:dir_data3}. The magnitude of the detail coefficients is well kept within the dynamic region determined by the resolution level and the coarsening/refinement thresholds.

\begin{figure}
  \centering
  \includegraphics[width=1\linewidth]{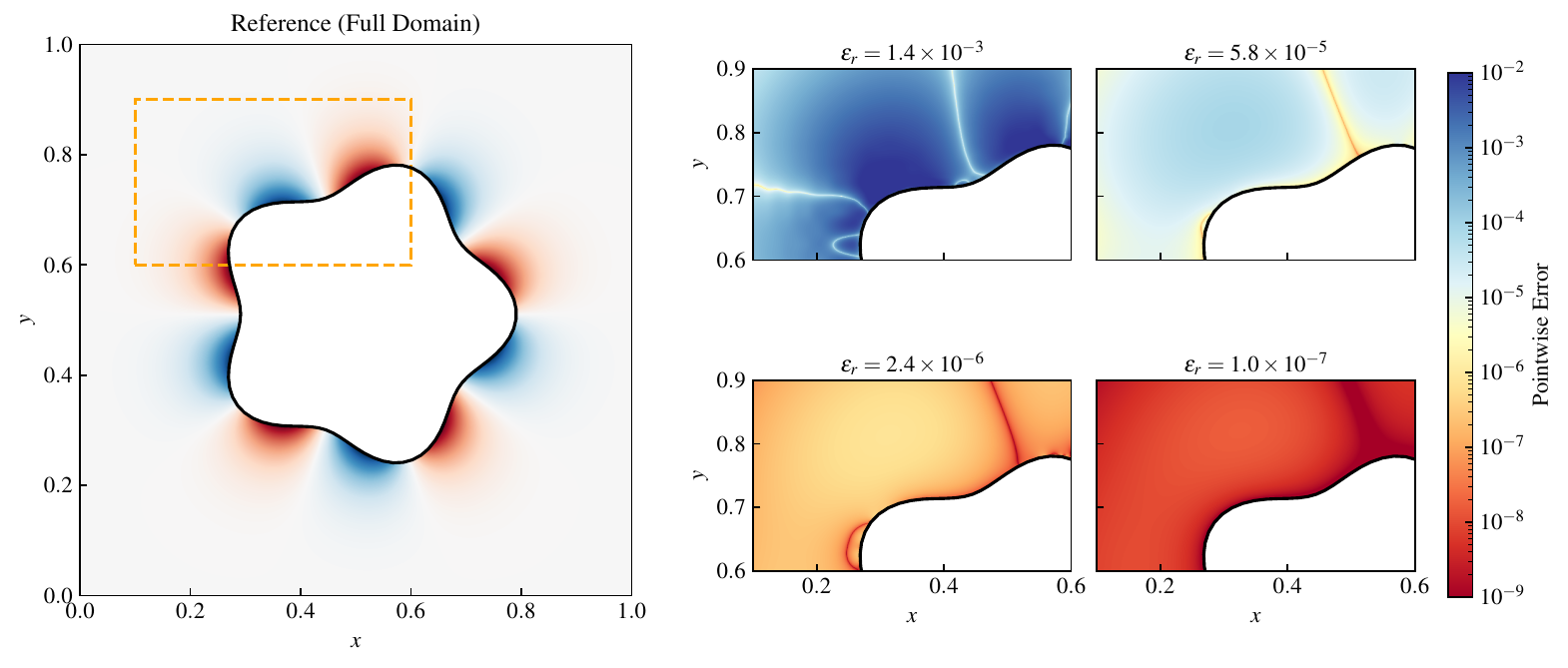}
  \caption{Left: Reference solution to the diffusion problem of Section~5.2 at t=5.0; Right: Pointwise numerical error around the top-left of the geometry as a function of refinement threshold $\varepsilon_r$. \label{fig:dir_star}}
\end{figure}

\begin{figure}[t]
  \begin{subfigure}[c]{0.32\textwidth}
    \centering{\includegraphics[width=0.99\textwidth]{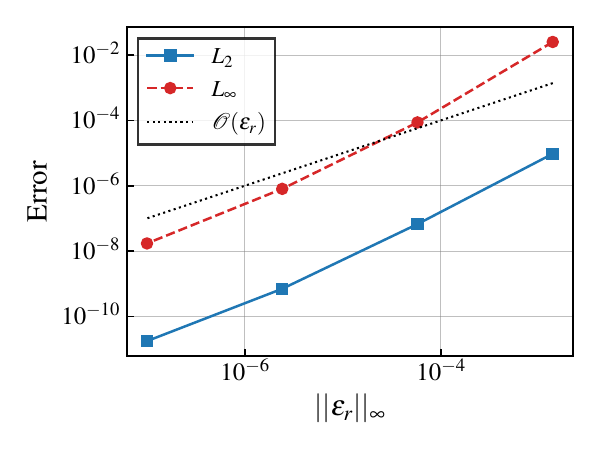}}%
    \subcaption{Relative error vs $\varepsilon_r$. \label{fig:dir_data1}}
  \end{subfigure}
  \begin{subfigure}[c]{0.32\textwidth}
    \centering{\includegraphics[width=0.99\textwidth]{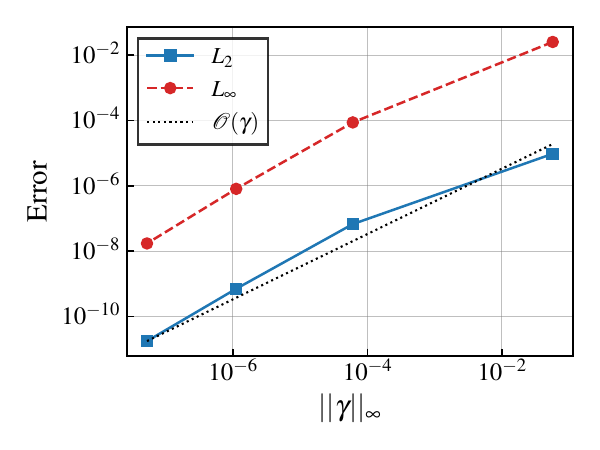}}%
    \subcaption{Relative error vs $||\gamma||_\infty$.\label{fig:dir_data2}}%
  \end{subfigure}
  \begin{subfigure}[c]{0.32\textwidth}
    \centering{\includegraphics[width=0.99\textwidth]{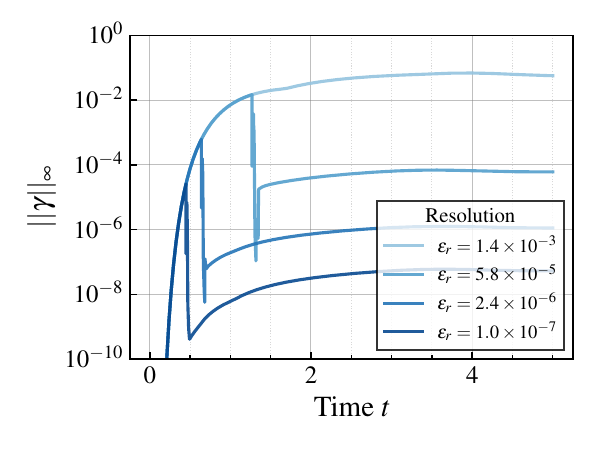}}%
    \subcaption{$||\gamma||_\infty$ over time. \label{fig:dir_data3}}%
  \end{subfigure}
  \caption{Performance of the wavelet-based grid adaptation for the diffusion problem outside of an immersed star with time-varying Dirichlet boundary conditions.}
  \label{fig:dir_data}
\end{figure}

\subsection{Navier Stokes equations}\label{sec:NS}
In this section we incorporate the proposed sharp immersed wavelet-collocation scheme into the IIM Navier-Stokes solver to simulate fluid flow with immersed boundaries. The incompressible Navier-Stokes equation takes the form
\begin{align}
  \frac{\partial \mathbf{u}}{\partial t} + \mathbf{u}\cdot \nabla \mathbf{u} & = -\frac{1}{\rho} \nabla p + \nu \Delta \mathbf{u} \\
  \nabla \cdot \mathbf{u}                                                    & = 0
\end{align} 
where $\mathbf{u}(\mathbf{x})$ is the velocity, $\rho$ is the fluid density, and $\nu$ is the kinematic viscosity. Throughout our following numerical test cases, for simplicity $\rho=1$. The wavelet-based adaptation strategy is coupled with an $M=4$th order finite difference IIM Navier-Stokes solver developed in \cite{JI2026114806}. The solver uses a projection method where the boundary condition for the pressure Poisson equation is modified so that the pseudo-pressure at the first stage of each Runge-Kutta step is a high order approximation to the true pressure. The convection term is discretized using a fifth order upwind stencil, while all the other differential operators are discretized with fourth order central difference stencils. The time step size is automatically adjusted to the maximum allowed given an imposed CFL number $\frac{\|U\|_{\infty}\Delta t}{\Delta x}$ and an imposed Fourier number $\frac{\nu \Delta t}{\Delta x^2}$; in practice the former dominates for the test cases below.

\subsubsection{A translating/rotating star immersed in incompressible fluid}
In this section the wavelet-based adaptive scheme is applied to a moving boundary problem where a translating and rotating star is immersed in a 2D incompressible fluid. The rigid body motion of the star is prescribed by its translation and rotation over time. Both the angle $\theta(t)$ and the translation in the x-direction $x_c(t)$ are chosen as translated and dilated versions of the periodic mollifier function $\chi(x)$, defined above. The y-coordinate of the center is set to be a constant $y_c=0.51$.
\begin{figure}
\centering
\includegraphics[width=1.0\linewidth]{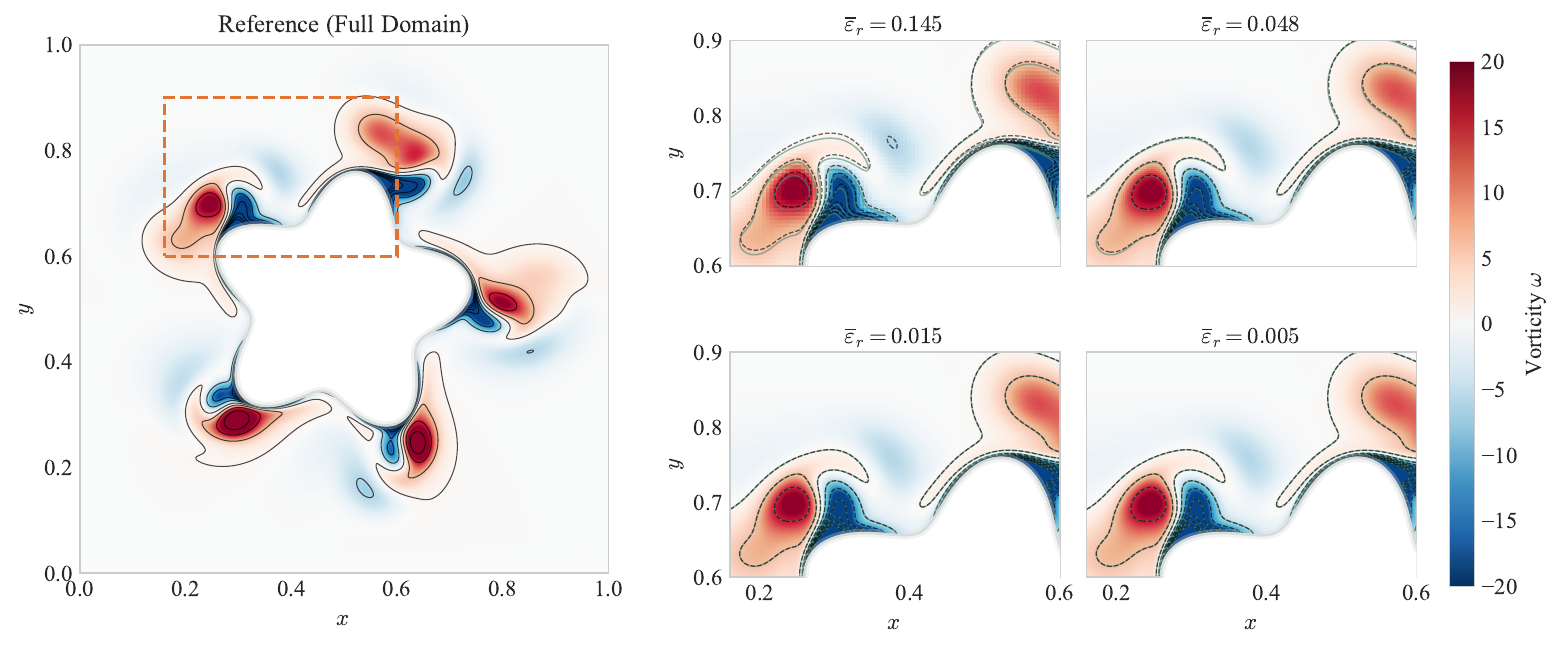}
\caption{Results of an immersed star translating and rotating in a quiescent fluid. Left: Reference solution at time $t=1.75$. Right: Vorticity fields of the adaptive simulations. The black dashed contours are adaptive solutions, while the green solid contours are the reference solution obtained using a high resolution grid. }
\label{fig:rot_star_error}
\end{figure}
The IIM finite difference discretization of the Navier-Stokes equations is set up exactly the same as in Section \ref{sec:VD}. The Reynolds number is $Re=U\overline{r}/\nu = 774.4$, where $U=1.76$ is the characteristic surface velocity of the star, and $\overline{r}$ is the star's mean diameter. The wavelet order in this test case is fixed as $(6,2)$. The refinement threshold (non-dimensionalized by $U$) ranges from $\overline\varepsilon_r= 0.005$ to $0.145$, and $\varepsilon_r=64\varepsilon_c$ throughout. In this problem, we set $k=1$, since Re $\gg \mathcal{O}(1)$, meaning the nonlinear advection term (which contains first order derivatives in space) dominates the flow evolution. We compare the numerical solution using our wavelet-based adaptive strategy with a $768\times 768$ uniform resolution reference solution. The vorticity field of the reference solution $\omega(\mathbf{x})$ at time $t=1.75$ is plotted in Figure \ref{fig:rot_star_error} (left). We also plot the numerical solutions obtained from simulations with different refinement thresholds at time $t=1.75$ in Figure \ref{fig:rot_star_error} (right). The contour lines of the numerical vorticity field (black) are plotted along with the reference contour lines (green). In this test case, the numerical solution starts to converge even at a relatively high threshold $\overline\varepsilon_r=0.048$, with most of the vortical structures and boundary layer dynamics captured qualitatively compared to the reference solution. For a threshold $\overline\varepsilon_r\leq 0.05$, the solution fully converges to the reference solution, with no obvious deviation on the level set contours compared to the reference solution.

Figure \ref{fig:detailvstime} shows the maximum detail coefficient $||\gamma||_\infty$ over time for the four different refinement thresholds. The corresponding spatial resolution over time for each simulation is displayed in Figure \ref{fig:resvstime}. We also plot the $L_2$ and $L_\infty$ norm errors of our numerical solutions with respect to $\varepsilon_r$ in Figure \ref{fig:epsvserr2}. Both $L_2$ and $L_\infty$ norm errors decay with respect to $\varepsilon_r$ at a rate slightly higher than first order. This demonstrates that the proposed wavelet-based adaptation method can provide a robust bound to the numerical error even for complex nonlinear problems with moving domain boundaries.
\begin{figure}[t]
\begin{subfigure}[c]{0.33\textwidth}
\centering{\includegraphics[width=0.99\textwidth]{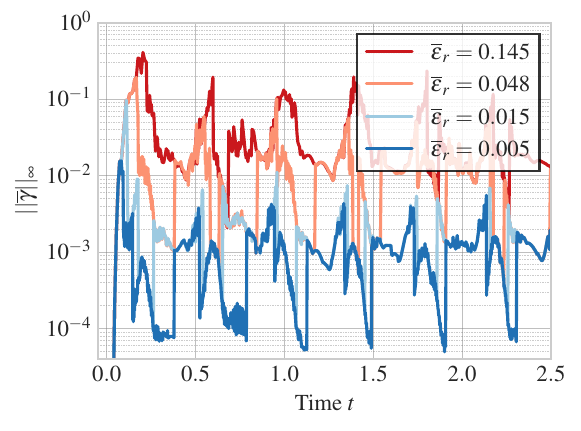}}%
\subcaption{$||\overline{\gamma}||_\infty$ vs time.\label{fig:detailvstime}}
\end{subfigure}
\begin{subfigure}[c]{0.32\textwidth}
\centering{\includegraphics[width=0.99\textwidth]{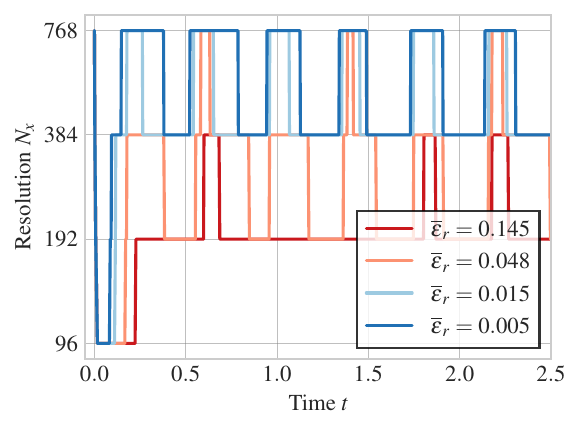}}%
\subcaption{Resolution vs time.\label{fig:resvstime}}%
\end{subfigure}
\begin{subfigure}[c]{0.32\textwidth}
\centering{\includegraphics[width=0.99\textwidth]{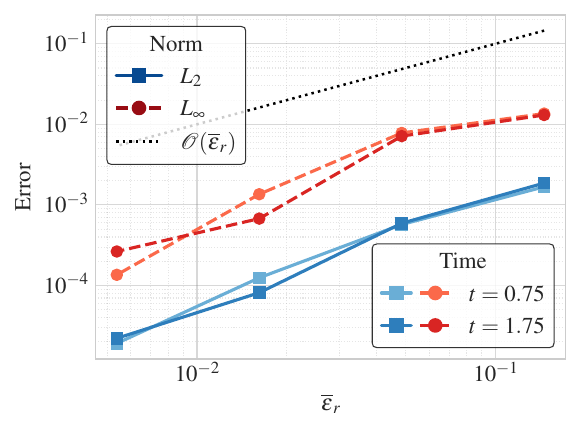}}%
\subcaption{$L_2$ and $L_\infty$ errors vs $\overline\varepsilon_r$. \label{fig:epsvserr2}}%
\end{subfigure}
\caption{Left: Maximum detail coefficients (non-dimensionalized with respect to the characteristic velocity $U$) over time for the translating/rotating star simulations with different refinement thresholds $\overline{\varepsilon}_r$ (also non-dimensionalized). Middle: The resolution levels over time for different simulations. Right: $L_2$ and $L_\infty$ norm errors of the numerical solutions as a function of $\overline{\varepsilon_r}$. A clear linear dependence is observed.\label{fig:staranalysis}}
\end{figure}

\subsubsection{Vortex dipole impinging on an immersed wall} \label{sec:VD}
In this final section, we apply our method to a two-dimensional dipole-wall collision benchmark test case, which was first introduced in \cite{clercx2006normal}. In this test, a thin boundary layer is generated over time near the immersed wall, which makes it a challenging simulation to test the convergence and accuracy of numerical solvers. In \cite{keetels2007fourier}, several numerical methods have been applied to solve this dipole-wall collision test case, including a penalization-based immersed method, spectral methods, and wavelet-Galerkin methods. Recently, in \cite{JI2026114806}, a fourth order immersed interface scheme is used to solve the same dipole-wall problem within an immersed domain at a fixed angle with respect to the Cartesian grid.

Here, we follow the exact same setup of the problem as the one in \cite{JI2026114806}. The Reynolds number is fixed at $Re=1000$, and the fluid domain $\Omega$ is defined as an immersed rounded square rotated at an angle $3\pi/10$ with respect to the computational domain $C=[0,1]^2$, which is discretized with a uniform Cartesian grid. The square has a side length $L=0.65123$ and is centered at the point $(0.501, 0.499)$.  To properly resolve the fluid domain, the corner of $\Omega$ is rounded with curvature $L/10$.

The initial resolution for all the adaptive simulations is $N_x=384$. The results are compared to a fixed-resolution reference solution obtained with $N_x=1536$. The CFL number is fixed to be $0.4$ throughout the simulations. A $(6, 2)$-order interpolating wavelet is used to perform grid adaptation every ten time steps. We vary the refinement threshold $\overline\varepsilon_r$ from \numrange{0.223}{0.0017}, and fix the ratio $\overline\varepsilon_r / \overline\varepsilon_c = 32$. Here the non-dimensional thresholds are defined as $\overline\varepsilon_{r,c}=\varepsilon_{r,c}/U$ where $U=\max|\mathbf{u}_0(\mathbf{x})|$ is the characteristic velocity magnitude. In this specific test case, the level-independent adaptation strategy corresponding to $k=0$, as described in section~\ref{sec:coupling}, is used. There are two reasons for this choice: first,  the vortex-wall collision causes strong nonlinearity, so that the error bound of Proposition~\ref{prop:3} does not hold; second, the detail coefficients are in the pre-asymptotic regime for relatively low resolutions ($<768$). In the pre-asymptotic regime, a level-independent strategy is stricter than the level-dependent one, and the situation reverses in the asymptotic regime.

\begin{figure}
\centering
\includegraphics[width=1.0\linewidth]{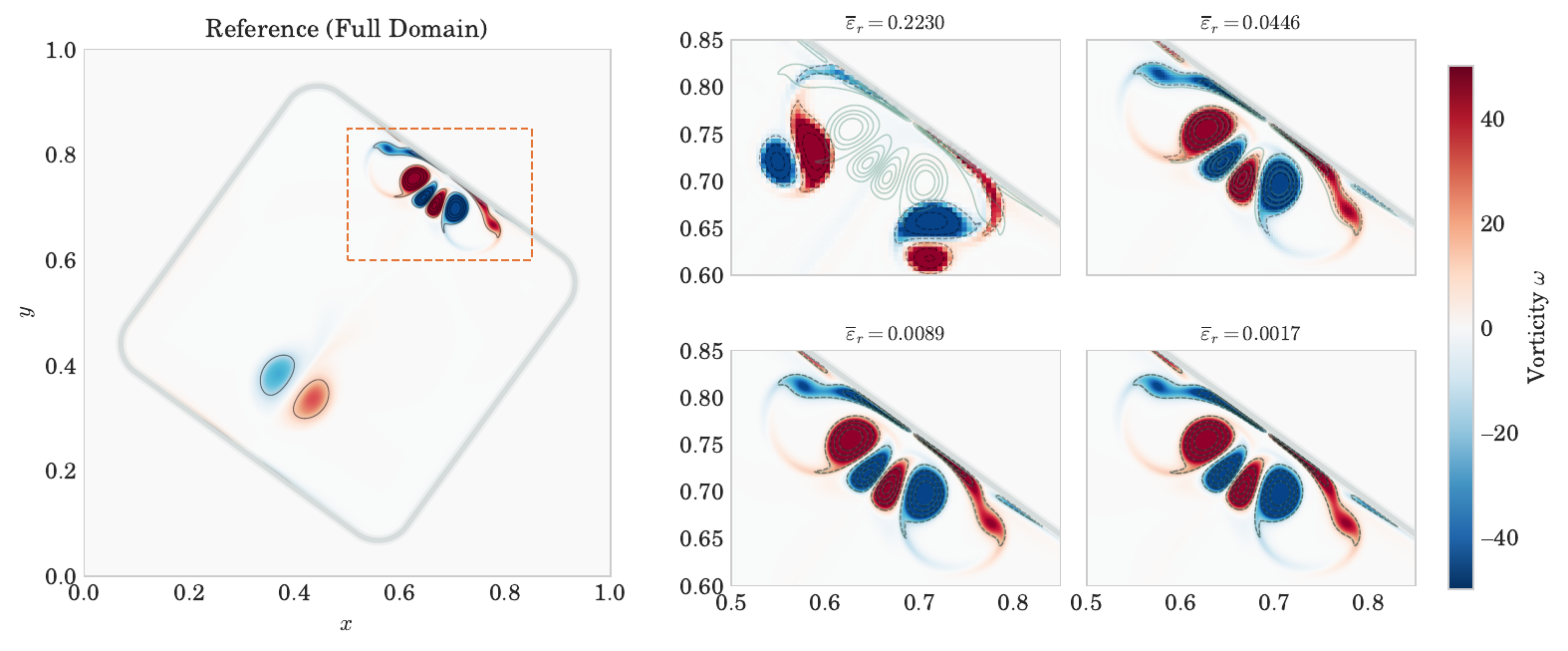}
\caption{Flow field of a vortex dipole-wall collision inside an immersed domain rotated with respect to the Cartesian background grid. Left: Vorticity (both field and contour) at time $t=0.6$ for the reference solution at $1536^2$ resolution. Right: Adaptive simulations with different thresholds $\varepsilon_c$ at the same time. The vorticity contours of the adaptive simulations are black, while the contour of the reference solution is green.} \label{fig:dipole_vort}
\end{figure}

Figure \ref{fig:dipole_vort} compares the vorticity contours of adaptive simulations with different coarsening / refinement thresholds to the reference solution obtained with uniform $1536^2$ resolution. For $\overline\varepsilon_r=0.223$ the solution diverges completely from the reference solution, as the simulation is allowed to produce an error of magnitude $\mathcal{O}(1)$. For smaller $\overline\varepsilon_r$ the solution gradually converges to the reference solution as the threshold decreases in magnitude. When $\overline\varepsilon_r=\num{0.0017}$ the adaptive solution fully converges to the reference solution, with no obvious deviation. Figure \ref{fig:dipole_gamma-time} plots the maximum detail coefficient $||\gamma||_\infty$ as a function of time for different adaptive simulations. The results show that the magnitude of the detail coefficient is kept well within the interval $[\overline\varepsilon_c, \overline\varepsilon_r]$. The spatial resolution over time is shown in Figure \ref{fig:dipole_res-time}, demonstrating the coarsening and refinement of grid as wall-collision events unfold.

\begin{figure}[htbp]
\centering

\begin{subfigure}[t]{0.48\textwidth}
\centering
\includegraphics[width=\linewidth]{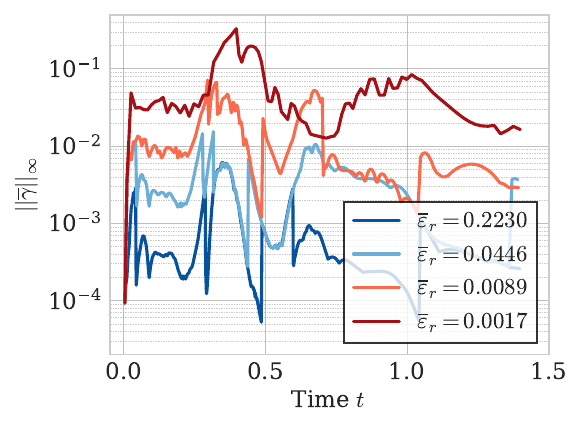}
\caption{$||\gamma||_\infty$ vs.\ time}
\label{fig:dipole_gamma-time}
\end{subfigure}\hfill
\begin{subfigure}[t]{0.48\textwidth}
\centering
\includegraphics[width=\linewidth]{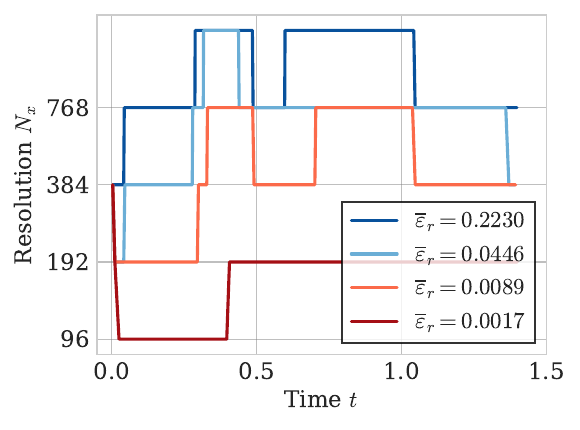}
\caption{Resolution vs. time}
\label{fig:dipole_res-time}
\end{subfigure}

\caption{Temporal variation of the maximum detail coefficient (left) and the grid resolution (right) for for different refinement thresholds $\varepsilon_r$ of the vortex dipole simulations.}
\label{fig:dipole_res}
\end{figure}

\begin{figure}[htbp]
\centering

\begin{subfigure}[t]{0.48\textwidth}
\centering
\includegraphics[width=\linewidth]{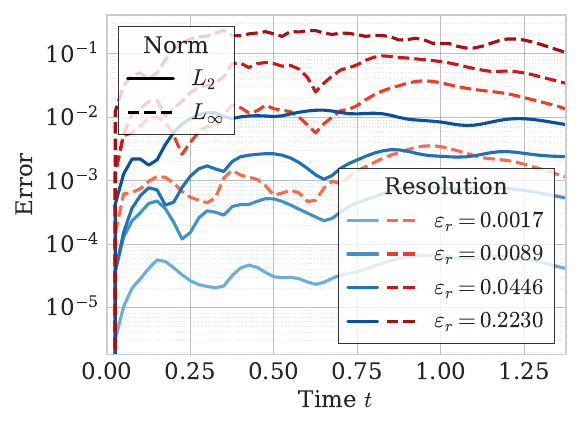}
\caption{Error vs.\ time}
\label{fig:dipole_error-time}
\end{subfigure}\hfill
\begin{subfigure}[t]{0.48\textwidth}
\centering
\includegraphics[width=\linewidth]{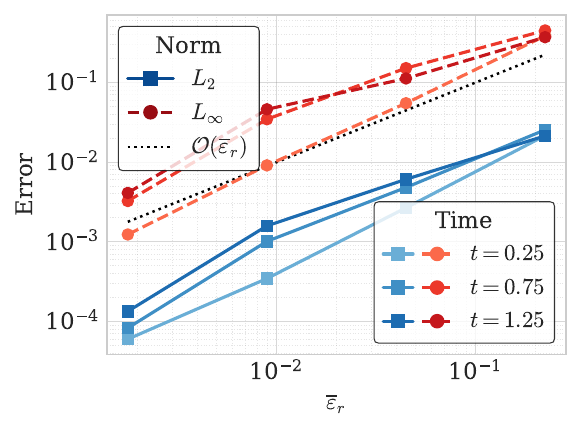}
\caption{Error vs.\ $\varepsilon_r$}
\label{fig:dipole_error-eps}
\end{subfigure}

\caption{Left: $L_2$ and $L_\infty$ norm errors over time for different vortex dipole simulations. Right: Error convergence with respect to $\varepsilon_r$ at different time slices. The $L_2$ norm errors show a linear dependence with respect to $\varepsilon_r$, while $L_\infty$ is suboptimal. }
\label{fig:dipole_error}
\end{figure}

Figure \ref{fig:dipole_error-time} shows the $L_2$- and $L_\infty$-norm errors of the $x$-component velocity field over time for simulations with different $\overline\varepsilon_r$. In Figure \ref{fig:dipole_error-eps}, errors as functions of the refinement threshold $\varepsilon_r$ are also plotted for time slices $t=0.25, 0.75, 1.25$. The $L_2$ error is well bounded by the chosen refinement threshold $\overline\varepsilon_r$, showing a clear linear dependence; the $L_\infty$ error is bounded by the threshold with sub-optimality due to the strong nonlinearity that occurred in the simulation. Nevertheless, tightening the refinement threshold still corresponds to a significant decrease in the $L_\infty$ error.
\begin{figure}[htbp]
\centering

\begin{subfigure}[t]{0.48\textwidth}
\centering
\includegraphics[width=\linewidth]{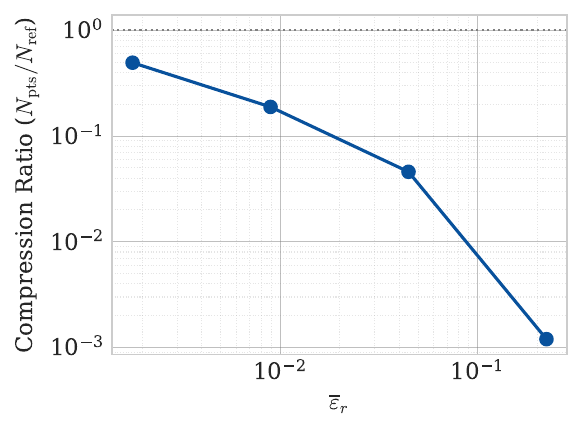}
\caption{Compression rate vs $\overline\varepsilon_r$.}
\label{fig:comp-eps}
\end{subfigure}\hfill
\begin{subfigure}[t]{0.48\textwidth}
\centering
\includegraphics[width=\linewidth]{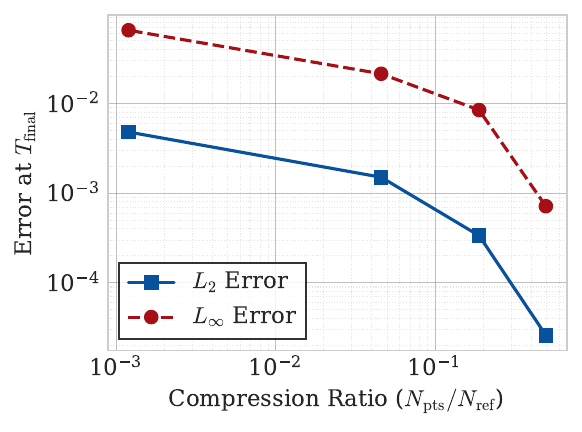}
\caption{Relative error vs compression rate.}
\label{fig:err-comp}
\end{subfigure}

\caption{Left: Compression rate for different adaptive solutions as a function of the refinement threshold $\overline\varepsilon_r$ for the vortex dipole-wall interaction. Right: The $L_2$ and $L_\infty$ errors as functions of the compression rate.}
\label{fig:compression}
\end{figure}

Finally, we investigate the relationship between the refinement threshold, the relative error, and the total compression rate across the entire simulation. The total compression rate is computed as the ratio between the number of spatiotemporal degrees-of-freedom (DOFs) in the adaptive simulations and the number of DOFs in the $1536^2$ reference solution. In Figure \ref{fig:comp-eps}, the compression rate is plotted against the refinement threshold $\varepsilon_r$. The $L_2$ and $L_\infty$ norm errors as functions of the compression rate are also shown in the same Figure. Overall, a larger numerical error is obtained when $\overline\varepsilon_r$ is larger, but this is compensated by the gain in a better compression rate, which yields a better computational efficiency. 

\section{Conclusion}\label{sec:conclusion}
This work presents a novel wavelet-based temporally-adaptive grid method designed for sharp immersed discretization methods. The proposed method provides a robust dependency of the numerical error on the user-specified refinement threshold, detecting grid adaptation needs everywhere, including around the immersed geometry. To achieve this, a novel interpolating wavelet transform algorithm is constructed on arbitrary irregular domains, preserving the order $N$ of the wavelet everywhere. This remains true even for irregular domains with non-convex features. Like the original interpolating wavelet transform \cite{deslauriers_interpolation_1987, donoho1992interpolating, donoho1999deslauriers}, the modified version has an $\mathcal{O}(N_p)$ complexity for both forward and inverse wavelet transforms, where $N_p$ is the number of grid points. Unlike other existing wavelet algorithms such as \cite{li2000shape, minami20013}, the proposed approach takes advantage of proper polynomial extrapolations so that the detail coefficients decay as $\mathcal{O}(h^N)$.

We combined the proposed wavelet transform algorithm with an immersed interface solver \cite{JI2026114806} to achieve a temporally adaptive grid resolution strategy. Using a mathematical proof, it is shown that for linear PDEs the user-defined refinement threshold for the grid adaptation strategy provides an upper bound to the numerical error of the PDE solution over time. This guarantees that the wavelet collocation adaptive resolution strategy robustly keeps numerical errors in the PDE solution within the manually-chosen tolerance (up to a constant). 
The error scaling is validated via a linear diffusion problem around a star-shaped immersed body. For more practical scenarios, we also applied the method to a vortex dipole-wall collision problem which incurs strong nonlinear dynamics, and a translating/rotating immersed star problem which incurs a deformed domain. From all these tests, we observe that the immersed wavelet collocation method suppresses the numerical error under the manually set threshold $\varepsilon_r$.

Extension of the temporal grid adaptation to three-dimensional domains is straightforward: in free space and near a convex boundary, an $xyz$ dimension-split approach to the wavelet transform can be adopted. Near a boundary with concave geometry, a similar least squares polynomial extrapolation can be applied, with the only tweak that the least squares region in 3D becomes a half ellipsoid, as in previous extensions of the IIM and wavelet transform to 3D \cite{gillis2022murphy,gabbard2024high}.

A more challenging future direction is the extension to spatially adaptive grid resolutions. Performing a wavelet transform near an immersed interface or boundary with a spatial resolution jump poses some unique challenges. The elliptical region proposed here to estimate boundary values and derivatives would generally cover regions of varying spatial resolution. Ideally, a local high-order wavelet transform would refine all these regions locally to a constant resolution before evaluating the least-squares polynomial fit; however, this wavelet transform potentially requires again the boundary values and derivatives that we are aiming to reconstruct in the first place. This interdependency leads to a series of local linear systems of equations that needs to be solved around the interface; formulating and solving those systems efficiently is left for future work.

\section*{Declarations}
The authors declare that they have no known competing financial interests or personal relationships that could have appeared to influence the work reported in this paper.

\section*{Data availability}
The datasets generated in this study are available from the corresponding author on reasonable request.

\begin{appendices}
\section{Analysis of detail coefficients}
\label{sec:detailanal}
In this section, we analyze the magnitude of the detail coefficients with respect to the grid size $h$, both in free space and near the boundary. We then explain why using boundary conditions in our interpolating wavelet transform is helpful in reducing the magnitude of the detail coefficients near the wall. Finally, we show that our treatment of the concave geometries preserves the formal order of the interpolating wavelet transform.

\subsection{Sufficiently large intervals} \label{sec:detailwall}
The calculation of the detail coefficients in the interpolating wavelet transforms can be viewed as a polynomial-fitting problem. Here, for the sake of simplicity, we consider a one-dimensional grid $\mathcal{X}^{L+1} = \left\{x_j^{L+1} \equiv jh, j\in \mathbb{Z}\right\}$ where $h=2^{-(L+1)}$. Again we assume that $\lambda^{L+1}_j=f(x_j^{L+1})$ for some function $f\in \mathcal{C}^{\infty}$. Without loss of generality, we consider the detail coefficient $\gamma^L_{0}$ in the free space, which can be expanded as
\begin{align}
\begin{split}\gamma^L_{0} & = \lambda_{1}^{L+1} - \sum_{j=-N/2+1}^{N/2}\tilde{S}_{0,j} \lambda_{2j}^{L+1} \\
  & = f(x_j^{L+1}) - (\tilde{S} f) (x_j^{L+1})
\end{split}
\end{align}
where $(\tilde{S} f)$ is the unique $(N-1)$-th degree polynomial interpolated from the set of data $\left\{f\left(x^{L+1}_{2j}\right), j\in [\![-N/2+1, N/2]\!]\right\}$. The Lagrange remainder theorem \cite{suli2003introduction} then yields an upper bound for the detail coefficient:
\begin{align}
\begin{split}
  \left|\gamma^L_0\right| & = \left|f(x_j^{L+1}) - (\tilde{S} f) (x_j^{L+1})\right|                                           \\
  & = \left|\frac{f^{(N)}(\zeta)}{N!}\prod_{j=-N/2+1}^{N/2}\left(x_1^{L+1}-x_{2j}^{L+1}\right)\right| \\
  & \leq \left|\frac{M_1}{N!}\prod_{j=-N/2+1}^{N/2}(1-2j)\right| h^N=\Gamma_1,
\end{split}
\end{align} where $M_1=\max_{x_{-N+2}^{L+1}\leq \zeta\leq x_{N}^{L+1}}\left|f^{(N)}(\zeta)\right|$. From here we can see that the detail coefficient $\gamma^L_0$ indeed scales as $\mathcal{O}(h^N)$.

Now let us further consider the case where we have a boundary at $x_b=(1-\psi)h, \psi\in(0, 1]$, and thus the one-dimensional grid becomes $\mathcal{X}^{L+1'}=\left\{x_j^{L+1}, j\in \mathbb{N}^+\right\}$. By the uniqueness of $(N-1)$-th degree polynomials, the extrapolation-based wavelet transform specified in Section \ref{sec:wt1d} can be reinterpreted as
\begin{equation}
\gamma_0^{L'} = f(x_j^{L+1}) - (\tilde{S}' f) (x_j^{L+1})
\end{equation} where $\tilde{S}' f$ is the unique $(N-1)$-th degree polynomial interpolated from a set of one-sided data $\left\{f\left(x_{2j}^{L+1}\right), j\in [\![1, N]\!]\right\}$. Again, applying Lagrange remainder theorem gives us an upper bound for this freshly computed detail coefficient $\gamma_0^{L'}$:
\begin{align}
\left|\gamma^{L'}_0\right| \leq \left|\frac{M_2}{N!}\prod_{j=1}^{N}(1-2j)\right| h^N = \Gamma_2
\end{align} where $M_2=\max_{x_{1}^{L+1}\leq \zeta\leq x_{2N}^{L+1}}\left|f^{(N)}(\zeta)\right|$. We assume that $f^{(N)}(x)$ is Lipschitz continuous so that $M_2/M_1 \rightarrow 1$ as $h \rightarrow 0$. With this assumption, we compare the upper bounds for $\gamma_0^L$ and $\gamma_0^{L'}$ as $h \rightarrow 0$:
\begin{align}
\lim_{h \rightarrow 0} \Gamma_2 / \Gamma_1
& =\left.\left|\prod_{j=1}^{N}(1-2j)\right|\right/\left|\prod_{j=-N/2+1}^{N/2}(1-2j)\right|.
\end{align} 
This ratio approximates the Lebesgue constant for wavelet interpolation near the boundary, and we denote it as $\varrho=\lim_{h\rightarrow 0} \Gamma_2/\Gamma_1$. Notice that it is purely a function of the wavelet order $N$. Figure \ref{fig:bmf} shows $\varrho(N)$ for $N=2,4,6$ . Notice that this function grows exponentially, \textit{i.e.}, $\varrho(N) \sim \mathcal{O}(2^N)$. This means that for very high order wavelets, the detail coefficients near the boundary are likely to be magnified significantly.
\begin{figure}
\centering
\includegraphics[width=0.6\linewidth]{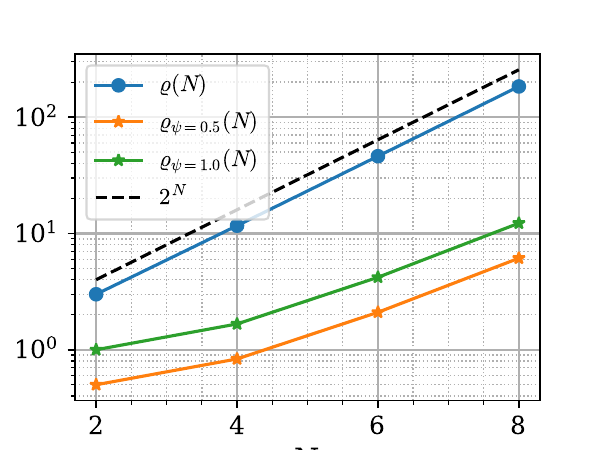}
\caption{Lebesgue constants near the boundary for different wavelet orders. Blue: boundary condition not used; Orange: boundary condition is used, and $\psi=0.5$; Green: boundary condition is used, and $\psi=1.0$.}
\label{fig:bmf}
\end{figure}

We can also investigate the effect of incorporating the boundary condition into our wavelet transform algorithm. Using the boundary condition $f(x_c)$ where $x_c=(1-\psi)h$ when performing the wavelet transform, we effectively constructed another polynomial $p''$ from the set of data $\left\{f\left(x_c\right)\right\}\cup \left\{f\left(x_{2j}^{L+1}\right), j\in[\![1, N-1]\!]\right\}$. The detail coefficient in this context is defined as
\begin{equation}
\gamma_0^{L''} = f(x_j^{L+1}) - (\tilde{S}'' f) (x_j^{L+1}).
\end{equation} We can obtain its upper bound by the following computation, which once again uses the Lagrange remainder theorem:
\begin{align}
\left|\gamma^{L''}_0\right| \leq \left|\frac{M_3}{N!}\psi\prod_{j=1}^{N-1}(1-2j)\right| h^N = \Gamma_3
\end{align}
where $M_3=\max_{x_b\leq \zeta\leq x_{2N}^{L+1}}\left|f^{(N)}(\zeta)\right|$. We compare this upper bound for $\gamma_0^{L''}$ with the upper bound for $\gamma_0^L$ as $h \rightarrow 0$:
\begin{align}
\lim_{h \rightarrow 0} \Gamma_3 / \Gamma_1 =\left.\left|\psi\prod_{j=1}^{N-1}(1-2j)\right|\right/\left|\prod_{j=-N/2+1}^{N/2}(1-2j)\right|.
\end{align} Again, we assumed Lipschitz continuity of the function $f^{(N)}(x)$, so that $\lim_{h\rightarrow 0}M_3/M_1=1$. It is easy to see that $\varrho = \lim_{h\rightarrow0}\Gamma_3/\Gamma_1$ is a function of both $N$ and $\psi$. Figure \ref{fig:bmf} shows both $\varrho(N, \psi=0.5)$ (the average case) and $\varrho(N, \psi=1.0)$ (the worst case). It is obvious from the figure that using a boundary condition in the wavelet transform drastically mitigates the Runge phenomenon under mild $N$. In practice, the wavelets used are often of order up to six, a regime where the Lebesgue constant is manageable.

\subsection{Narrow intervals} \label{sec:hermite}
As discussed in Section \ref{sec:concave}, when dealing with concave geometries, we proposed to fit a Hermite-like 1D interpolant $q(x)$ to extrapolate the ghost point values. This polynomial $q(x)$ differs slightly from the traditional definition of a Hermite interpolant. A general Hermite interpolant can be formulated as follows \cite{han2009theoretical}. Find $p(x)\in \mathcal{P}_{K-1}, K=\sum_{i=1}^n(m_i+1)$ that satisfies the conditions
\begin{equation}
p^{(j)}(x_i)=f^{(j)}(x_i), \quad 0\leq j\leq m_i, \ 1\leq i\leq n.
\end{equation} For such a polynomial $p(x)$, we have the error estimate
\begin{align}
|f(x) - p(x)|=\left|\frac{f^{(K)}(\zeta_x)}{K!}\prod_{i=1}^n (x-x_i)^{m_i+1}\right|.
\end{align}

Our polynomial interpolant $q(x)$ does not satisfy the definition of a Hermite interpolant of the original function $f(x)$, due to the following two facts. First, depending on the location of the control points $x_{c,1}$ and $x_{c,2}$, we might or might not include the conditions
\begin{equation}q(x_{c,1})=f(x_{c,1}) \quad\text{and}\quad q(x_{c,2})=f(x_{c,2}). \label{eq:hermitebc}
\end{equation}
Specifically, when $x_{c,1}$ or $x_{c,2}$ is too close to the nearby interpolation point, we will not use the corresponding condition in Equation \ref{eq:hermitebc}, and replace it with a condition that matches the higher order derivative of $f(x)$ and $q(x)$. In this case, our interpolant $q(x)$ does not satisfy the definition of a Hermite interpolant. Second, conditions involving derivative of order $j$ are only satisfied up to an error of magnitude $\mathcal{O}(h^{N-j})$ (see Equation \ref{eq:hermiteerror}), which obviously violates the definition of a Hermite interpolant as well. However, our Hermite-like interpolant $q(x)$ is still a high-order approximation to $f(x)$, and the detail coefficients obtained through $q(x)$ still scale as $\mathcal{O}(h^N)$.

To show this, we consider the following set-up. Suppose that $x_{c,1}$ and $x_{c,2}$ are two adjacent control points due to the concavity of the boundary, such that there is a set of $k<N$ available grid points $\{x_1,\cdots, x_k\}$ for extrapolation, where $x_{c,1}$ and $x_{c,2}$ may or may not be included in this set, depending on the specific location of the two control points. The distance between $x_{c,1}$ and $x_{c,2}$ is of order $\mathcal{O}(h)$. Based on this set-up, we make the following two propositions.
\begin{proposition}
Let $f(x)\in \mathcal{C}^{\infty}$. Suppose a polynomial of degree $N-1$ satisfies the condition
\begin{equation}
  \begin{cases}
    p(x_i) = f(x_i), \quad i=1,\cdots, k;                          \\
    p^{(m)}(x_{c,1}) = f^{(m)}(x_{c,1}), \quad m=1,\cdots, \alpha; \\
    p^{(n)}(x_{c,2}) = f^{(n)}(x_{c,2}), \quad n=1,\cdots, \beta
  \end{cases} \quad \text{where } \alpha+\beta+k=N. \label{eq:pdef}
\end{equation} Then for any $x\in [x_{c,1}, x_{c,2}]$, $|f(x)-p(x)|\sim \mathcal{O}(h^N)$.\label{prop:1}
\end{proposition}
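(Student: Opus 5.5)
We prove the bound by treating $e(x)=f(x)-p(x)$ as the solution of an interpolation problem with homogeneous data. Write $p=T_N+r$, where $T_N$ is the Taylor polynomial of $f$ of degree $N-1$ about $x_{c,1}$. Since $f\in\mathcal{C}^\infty$ and every node $x_i$, $x_{c,1}$, $x_{c,2}$ lies in an interval $I$ of length $\mathcal{O}(h)$, the remainder $R=f-T_N$ satisfies
\begin{equation*}
|R^{(j)}(x)|\le C\,h^{N-j},\qquad x\in I,\quad 0\le j\le N .
\end{equation*}
Because $T_N\in\mathcal{P}_{N-1}$ satisfies its own conditions exactly, the correction $r=p-T_N\in\mathcal{P}_{N-1}$ is the unique polynomial satisfying the conditions of \eqref{eq:pdef} with data $R(x_i)$, $R^{(m)}(x_{c,1})$ and $R^{(n)}(x_{c,2})$. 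The data therefore have size $\mathcal{O}(h^N)$ for point values and $\mathcal{O}(h^{N-m})$, $\mathcal{O}(h^{N-n})$ for the derivative conditions. Then $f-p=R-r$, and $|R|=\mathcal{O}(h^N)$ on $I$, so it remains to show $\|r\|_{\infty,I}=\mathcal{O}(h^N)$.

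For this, rescale: set $x=x_{c,1}+h s$ and $\tilde r(s)=r(x_{c,1}+hs)$. An $m$-th derivative condition becomes $\tilde r^{(m)}(s_c)=h^m r^{(m)}(x_c)=\mathcal{O}(h^N)$, and point conditions stay $\mathcal{O}(h^N)$. Hence all rescaled data are uniformly $\mathcal{O}(h^N)$. The rescaled nodes $s_i$, $s_{c,1}=0$ and $s_{c,2}$ lie in a bounded set independent of $h$. The generalized Vandermonde matrix $V(s_1,\dots,s_k,s_{c,2})$ for these $N$ functionals on $\mathcal{P}_{N-1}$ therefore has entries bounded independently of $h$, and $\tilde r$ has coefficients $V^{-1}\cdot\mathcal{O}(h^N)$. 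If $\|V^{-1}\|$ is bounded uniformly in $h$, then $\sup_{s}|\tilde r(s)|=\mathcal{O}(h^N)$ on the bounded rescaled interval, which gives $|f(x)-p(x)|=\mathcal{O}(h^N)$ for $x\in[x_{c,1},x_{c,2}]$. The argument extends verbatim to the nearby ghost points, at distance $\mathcal{O}(h)$.

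The main obstacle is this uniform invertibility, since well-posedness is not automatic for Birkhoff-type conditions. Unisolvence must hold because the derivative conditions start at order $1$, not $0$. I would first argue it for the configurations the algorithm actually produces. In those, the data at $x_{c,1}$ or $x_{c,2}$ form a consecutive block of derivatives, and the point values sit between the control points. Pólya/Atkinson–Sharma conditions for Birkhoff interpolation then give regularity whenever the incidence matrix has no odd supported sequences; failing that, the paper's standing assumption that the conditions are linearly independent can be invoked. With unisolvence for each fixed configuration, the set of admissible rescaled configurations is compact. It is compact provided the relative positions of the control points with respect to the grid vary in a closed set on which $\det V\neq0$; this is exactly where the algorithm's rule of dropping a boundary value that is too close to a grid point is used. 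Continuity of $\det V$ then yields $\|V^{-1}\|\le C$ uniformly, which closes the argument.

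Finally, the proposition assumes exact derivative data. Perturbing each derivative to $f^{(m)}(x_c)+\mathcal{O}(h^{N-m})$, as in \eqref{eq:hermiteerror}, only changes the rescaled data by $\mathcal{O}(h^N)$. The same bound therefore carries over to the Hermite-like interpolant $q$ actually used.
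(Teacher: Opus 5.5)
Your proof is correct in substance, but it follows a different route from the paper.

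\textbf{The paper's route.} The paper argues by Rolle's theorem. With $\omega(t)=(t-x_{c,1})^{\alpha}(t-x_{c,2})^{\beta}\prod_i(t-x_i)$, it differentiates $g(t)=E(t)-\omega(t)E(x)/\omega(x)$ $N$ times and counts roots. This yields the Cauchy-type formula $E(x)=f^{(N)}(\zeta_x)\omega(x)/N!$, which would give an explicit, configuration-independent constant with no invertibility hypothesis.

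\textbf{Your route.} You subtract the Taylor polynomial at $x_{c,1}$ and rescale so that every datum of the correction $r$ is $\mathcal{O}(h^N)$. That reduces the claim to a uniform bound on the inverse of the rescaled Birkhoff--Vandermonde matrix.

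\textbf{Why your route is the sounder one.} When no value is imposed at $x_{c,1}$, $g^{(j)}(x_{c,1})$ vanishes only for $1\le j\le\alpha-1$. Both $g(x_{c,1})=E(x_{c,1})$ and $g^{(\alpha)}(x_{c,1})=-\omega^{(\alpha)}(x_{c,1})E(x)/\omega(x)$ need not vanish, so the paper's root count comes up short. Concretely, take $N=3$, $k=\alpha=\beta=1$, $(x_{c,1},x_1,x_{c,2})=(0,h,2h)$ and $f(t)=t^3$. Then $p(t)=3ht^2-2h^3$ and $E(x)-\omega(x)=2h^2(h-x)\neq0$, although $f'''\equiv 6$ would force $E=\omega$. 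The $\mathcal{O}(h^N)$ conclusion survives, but the error formula does not. Some poisedness hypothesis is unavoidable, because non-poised Birkhoff problems exist (e.g.\ $k=0$, where constants are undetermined). Your appeal to Atkinson--Sharma is the right tool here. The P\'olya condition holds exactly when $k\ge1$, and derivative blocks occur only at the two extreme nodes, so there are no supported sequences at all.

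\textbf{What your route gains.} It gives the bound at the ghost points outside $[x_{c,1},x_{c,2}]$, which is where the detail coefficients actually need it. It also absorbs the perturbed-data case (the paper's second proposition) in one line.

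\textbf{Caveat on the compactness step.} The rule that drops a boundary value lying close to a grid point keeps two value nodes apart. It does not stop the narrow interval itself from shrinking, for example when a grid line grazes the tip of a thin wedge of $\Omega$. As $x_{c,2}-x_{c,1}\to0$, conditions of equal order at the two control points merge (e.g.\ the two first-derivative conditions when $\alpha,\beta\ge1$), and $\det V\to0$. Your uniform bound on $\|V^{-1}\|$ therefore holds only while $x_{c,2}-x_{c,1}$ stays above a fixed multiple of $h$. The paper's scaling argument for the perturbed interpolant makes the same restriction implicitly.
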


\begin{proposition}
Let $\varepsilon(x)$ be a polynomial of degree $N-1$ that satisfies
\begin{equation}
  \begin{cases}
    \varepsilon(x_i) = 0, \quad i=1, \cdots, k;                                    \\
    \varepsilon^{(m)}(x_{c,1})\sim\mathcal{O}(h^{N-m}), \quad m=1, \cdots, \alpha; \\
    \varepsilon^{(n)}(x_{c,2})\sim\mathcal{O}(h^{N-n}), \quad n=1, \cdots, \beta
  \end{cases} \quad \text{where } \alpha+\beta+k=N. \label{eq:epsfun}
\end{equation} Then for any $x\in [x_{c,1}, x_{c,2}]$, $|\varepsilon(x)|\sim \mathcal{O}(h^N)$. \label{prop:2}
\end{proposition}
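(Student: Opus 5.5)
We prove Proposition~\ref{prop:2} by a scaling argument. The key observation is that the conditions in \eqref{eq:epsfun} are homogeneous under the map $x\mapsto h s$. Everything lives on an interval of length $\mathcal{O}(h)$: the $k$ grid points, the two control points, and (for ghost evaluations) a neighbourhood of width $\mathcal{O}(Nh)$.

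First we rescale. Fix a reference point, say $x_{c,1}$, and write $x = x_{c,1} + h s$. Define $\hat\varepsilon(s) = h^{-N}\varepsilon(x_{c,1}+hs)$, which is again a polynomial of degree $N-1$. The chain rule gives $\hat\varepsilon^{(m)}(s) = h^{m-N}\varepsilon^{(m)}(x)$. The hypotheses therefore become
\[
\hat\varepsilon(s_i)=0,\qquad \hat\varepsilon^{(m)}(0)=\mathcal{O}(1),\qquad \hat\varepsilon^{(n)}(\hat s_{c,2})=\mathcal{O}(1),
\]
where $s_i=(x_i-x_{c,1})/h$ and $\hat s_{c,2}=(x_{c,2}-x_{c,1})/h$ all lie in a bounded set of size $\mathcal{O}(N)$. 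The claim $|\varepsilon(x)|=\mathcal{O}(h^N)$ is then equivalent to $\hat\varepsilon(s)=\mathcal{O}(1)$ uniformly for $s$ in a bounded interval.

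Next we represent the interpolation problem as a linear system. On $\mathcal{P}_{N-1}$ consider the linear map $A$ sending a polynomial to its $N$ data functionals: the values at the $s_i$, the derivatives of orders $1,\dots,\alpha$ at $0$, and the derivatives of orders $1,\dots,\beta$ at $\hat s_{c,2}$. The assumption that the conditions are linearly independent (as in Section~\ref{sec:wt1d_narrow}) means $A$ is invertible. Then $\hat\varepsilon = A^{-1}\mathbf{d}$ with $\|\mathbf{d}\|=\mathcal{O}(1)$. Evaluating at any $s$ in the bounded interval gives $|\hat\varepsilon(s)|\le \|\mathrm{ev}_s\|\,\|A^{-1}\|\,\|\mathbf{d}\|$. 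Undoing the scaling yields $|\varepsilon(x)|\le C h^N$, with the same argument holding at the ghost points just outside $[x_{c,1},x_{c,2}]$.

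Proposition~\ref{prop:1} follows by the same route. Let $p_H$ be the exact generalized Hermite interpolant from \cite{han2009theoretical} for these data (all derivative data taken exactly). Its error on the $\mathcal{O}(h)$ interval is bounded by $\|f^{(N)}\|_\infty\, h^N/N!$ times a product of $\mathcal{O}(1)$ factors, at least in the case where every derivative chain starts from a value condition. In general the chain may start at a derivative, e.g.\ $q'(x_{c,1})$ is given without $q(x_{c,1})$. For that case I would instead compare $p$ to the degree-$(N-1)$ Taylor polynomial $T$ of $f$ at $x_{c,1}$. On the interval, $f-T=\mathcal{O}(h^N)$, and its $m$-th derivative is $\mathcal{O}(h^{N-m})$. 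Consequently $p-T$ is a polynomial of degree $N-1$ whose data functionals take exactly the form in \eqref{eq:epsfun}, scaled appropriately. Proposition~\ref{prop:1} thus reduces to Proposition~\ref{prop:2} applied to $p-T$, and combining this with Proposition~\ref{prop:2} for the inexact derivatives coming from \eqref{eq:hermiteerror} handles the actual $q(x)$.

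The main obstacle is the uniformity of $\|A^{-1}\|$. The rescaled configuration $(s_i,\hat s_{c,2})$ depends on the geometry, namely the offsets $\psi$ of the control points, and it varies with $h$. The constant is bounded only if this configuration stays in a compact set on which $A$ remains invertible. I would argue as follows: the configuration ranges over a compact parameter set, since the offsets lie in $[0,1]$ and the point counts are finite. The determinant of $A$ is continuous in these parameters. The rule of dropping a value condition when a control point is too close to a grid point, replacing it by a higher derivative, is precisely what keeps the configuration away from coalescences where $\det A\to 0$. Making this rigorous would require checking Birkhoff-type poisedness (Pólya conditions) for the chosen incidence patterns. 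I expect to state it as an explicit assumption, namely that the conditions are linearly independent uniformly, rather than prove it in full generality.
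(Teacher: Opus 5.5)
Your argument is correct and is essentially the paper's proof. The paper expands $\varepsilon$ in cardinal functions $\pi_{1,m},\pi_{2,n}$ and rescales each one as $\pi_{1,m}(x)=h^{m}\tilde\pi_{1,m}(x/h)$, which is your $\hat\varepsilon=A^{-1}\mathbf{d}$ read column by column; it then treats $\tilde\pi_{1,m}$ as $\mathcal{O}(1)$ without comment, i.e.\ it relies on exactly the uniformity you flag.

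Your heuristic for that uniformity is wrong, though: the value-dropping rule does not stop $x_{c,1}$ and $x_{c,2}$ themselves from merging when a grid line barely clips $\Omega$, so $\|A^{-1}\|$ in the $h$-scaling need not stay bounded, and your ghost-point extension is genuinely non-uniform there. On $[x_{c,1},x_{c,2}]$ itself you can close the gap by rescaling with the chord length $\ell=x_{c,2}-x_{c,1}<2Nh$ instead of $h$: the control points then sit at $0$ and $1$, value nodes stay more than $1/(2N)$ apart, the only coalescences are Hermite-type, and the data become $\ell^{m}\mathcal{O}(h^{N-m})=\mathcal{O}(h^{N})$. Every such configuration (with $k\ge 1$) is poised by the Atkinson--Sharma theorem, since the derivative-only chains sit at the extreme nodes.
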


We can use the above two propositions \ref{prop:1} and \ref{prop:2} to show that our wavelet transform algorithm preserves the wavelet order even with the presence of concave geometries. If we define a polynomial $p(x)$ using Equation \ref{eq:pdef}, and compare it with our Hermite-like interpolant $q(x)$, it is easy to show that their difference $\varepsilon(x)=p(x)-q(x)$ satisfies Equation \ref{eq:epsfun}. We therefore conclude by the triangle inequality that for any $x\in [x_{c,1}, x_{c,2}]$,
\begin{equation}
|f(x)-q(x)| \leq |f(x)-p(x)|+|p(x)-q(x)| = |f(x)-p(x)|+|\varepsilon(x)|\sim \mathcal{O}(h^N),
\end{equation} and hence the detail coefficients computed via the polynomial $q(x)$ should scale as $\mathcal{O}(h^N)$.

\section{Proofs}
\textbf{Proof to Proposition \ref{prop:1}:} Suppose $x\in [x_{c,1}, x_{c,2}]$. The proposition is trivial for the case $x=x_i$, so we only consider the case where $x\neq x_i$. Let $E(x)=f(x)-p(x)$, where $p(x)$ is the unique polynomial interpolant through $\{x_i\}_{i=0}^k$. For a fixed $x$, we consider the function
\begin{equation}
g(t) = E(t)-\frac{\omega(t)}{\omega(x)}E(x)
\end{equation} where
\begin{equation}
\omega(x)=(x-x_{c,1})^\alpha (x-x_{c,2})^\beta \prod_{i=0}^{k}(x-x_i).
\end{equation} Notice that $g(t)$ has $k+1$ distinct roots for $t \in (x_{c,1}, x_{c,2})$, namely $t=x$ and $t=x_i$. By the mean value theorem, $g'(t)$ has $k$ distinct roots within $(x_{c,1}, x_{c,2})$. In addition, by direct substitution, it can be found that $g^{(1)}(x_{c,1})=g^{(1)}(x_{c,2})=0$, which means $x_{c, 1}$ and $x_{c, 2}$ are also roots of $g(t)$. Hence, we have $k+2$ distinct roots within $[x_{c,1}, x_{c,2}]$. We may apply the mean value theorem repetitively on $g^{(i)}$ to obtain the roots of $g^{(i+1)}$ within the interval $[x_{c,1}, x_{c,2}]$. Notice that on the boundaries, $x_{c,1}$ is a root of $g^{(i)}$ for $i<\alpha$, and $x_{c,2}$ is a root of $g^{(i)}$ for $i<\beta$. Therefore, if we apply this process for $N$ times, $(k+1+\alpha+\beta-N)=1$ root will be found within the closed interval $[x_{c,1}, x_{c,2}]$ for the function $g^{(N)}(t)$. That is, there exists a unique root $\zeta_x\in [x_{c,1}, x_{c,2}]$ that satisfies
\begin{equation}
0=g^{(N)}(\zeta_x)=f^{(N)}(\zeta_x)-\frac{N!}{\omega(x)}E(x).
\end{equation}
Hence, we obtain
\begin{equation}
E(x)=\frac{f^{(N)}(\zeta_x)}{N!} \omega(x).
\end{equation} Finally, we make note that $\omega(x)\sim \mathcal{O}(h^N)$ for $x\in [x_{c,1}, x_{c,2}]$. $\square$\\

\textbf{Proof to Proposition \ref{prop:2}:} Notice that $\varepsilon(x)$ can be written as
\begin{equation}
\varepsilon(x)=\sum_{m=1}^\alpha \eta_m \pi_{1, m} (x)+\sum_{n=1}^{\beta} \xi_n\pi_{2,n}(x)
\end{equation} where
\begin{equation}
\begin{cases}
  \eta_m = \varepsilon^{(m)}(x_{c,1}) \sim \mathcal{O}(h^{N-m}), \quad m=1, \cdots, \alpha; \\
  \xi_n = \varepsilon^{(n)}(x_{c,2}) \sim \mathcal{O}(h^{N-n}), \quad n=1, \cdots, \beta.
\end{cases}
\end{equation} Here, $\pi_{1, m}(x)$ is the unique cardinal function satisfying
\begin{equation}
\begin{cases}
  \pi_{1, m}(x_i) = 0,\quad i=1, \cdots, k;                            \\
  \pi_{1, m}^{(n)}(x_{c,1}) = \delta_{m,n}, \quad n=1, \cdots, \alpha; \\
  \pi_{1, m}^{(n)}(x_{c,2}) = 0, \quad n=1, \cdots, \beta.
\end{cases}
\end{equation} $\pi_{2, n}(x)$ is defined similarly. Note that $\pi_{1, m}(x)$ must scale as $\mathcal{O}(h^m)$. To see this, consider the unique $(N-1)$-th degree polynomial $\tilde{\pi}_{1, m}$ defined through
\begin{equation}
\begin{cases}
  \tilde{\pi}_{1, m}(x_i/h) = 0,\quad i=1, \cdots, k;                            \\
  \tilde{\pi}_{1, m}^{(n)}(x_{c,1}/h) = \delta_{m,n}, \quad n=1, \cdots, \alpha; \\
  \tilde{\pi}_{1, m}^{(n)}(x_{c,2}/h) = 0, \quad n=1, \cdots, \beta,
\end{cases}
\end{equation} so it is a rescaled version of $\pi_{1,m}(x)$ on a normalized grid with grid spacing equal to 1. Therefore, we write
\begin{equation}
\tilde{\pi}_{1,m}(x/h) = C_h \pi_{1,m}(x).
\end{equation} Differentiating $m$ times, we obtain
\begin{equation}
\tilde{\pi}_{1,m}^{(m)}(x/h)h^{-m} = C_h \pi_{1,m}^{(m)}(x).
\end{equation} But we know that $\tilde{\pi}_{1,m}^{(m)}(x_{c,1}/h)=\pi_{1,m}^{(m)}(x_{c,1})=1$.This implies that
\begin{equation}
C_h=h^{-m}
\end{equation} and thus
\begin{equation}
\frac{\pi_{1,m}(x)}{\tilde{\pi}_{1,m}(x/h)} = \frac{1}{C_h}=h^m.
\end{equation} Therefore, $\pi_{1,m}(x)\sim \mathcal{O}(h^m)$. Similar result holds for $\pi_{2,m}(x)$. We conclude with the following estimate:
\begin{align}
\varepsilon(x) & =\sum_{m=1}^\alpha \eta_m \pi_{1, m} (x)+\sum_{n=1}^{\beta} \xi_n\pi_{2,n}(x) \\ &\sim \sum_{m=1}^\alpha \mathcal{O}(h^{N-m})\mathcal{O}(h^{m})+\sum_{n=1}^{\beta} \mathcal{O}(h^{N-n})\mathcal{O}(h^{n})\\
& \sim \mathcal{O}(h^N). \quad \square
\end{align}

\end{appendices}

\bibliographystyle{spphys}

\end{document}